\newcommand{\bu}{\boldsymbol u}
\newtheorem{Theorem}{Theorem}
\newtheorem{lema}{Lemma}
\newcounter{remark}
\def\theremark {\arabic{remark}}
\newenvironment{remark}{\refstepcounter{remark}\par\noindent{\bf Remark\ \theremark}\ }{\par}
\newtheorem{Proof}{Proof}
\newenvironment{proof}{\begin{Proof}\rm}{\hfill $\Box$ \end{Proof}}
\title{Optimal bounds for numerical approximations of infinite horizon problems based on dynamic programming approach}
\author{ Javier de Frutos\thanks{Instituto de Investigaci\'on en Matem\'aticas (IMUVA), Universidad
de Valladolid, Spain. Research supported
by Spanish MINECO
under grant PID2019-104141GB-I00 and {by Junta de Castilla y Le\'{o}n  under grant VA169P20 co-finanzed by FEDER (EU) funds} (frutos@mac.uva.es)}
  \and Julia Novo\thanks{Departamento de
Matem\'aticas, Universidad Aut\'onoma de Madrid, Spain.  Research supported
by Spanish MINECO
under grant PID2019-104141GB-I00 and {by Junta de Castilla y Le\'{o}n  under grant VA169P20 co-finanzed by FEDER (EU) funds} (julia.novo@uam.es)}}
\date{\today}
\begin{document}
\maketitle
\abstract{In this paper we get error bounds for fully discrete approximations of infinite horizon problems via the dynamic programming approach.
It is well known that considering a time discretization with a positive step size $h$ an error bound of size $h$ can be proved for the
difference between the value function (viscosity solution of the Hamilton-Jacobi-Bellman equation corresponding to the infinite horizon) and
the value function of the discrete time problem. However, including also a spatial discretization based on elements of size $k$ an error bound of size $O(k/h)$ can be found in the literature for the error between the value functions of the continuous problem and the fully discrete problem. In this paper we revise the error bound of the fully discrete method and prove, under similar assumptions to those of the time discrete case, that the error of the fully discrete case is in fact $O(h+k)$ which gives first order in time and space for the method. This error bound matches the numerical experiments of many papers in the literature in which the behaviour $1/h$ from the bound $O(k/h)$ have not been observed.}
\bigskip

{\bf Key words.} Dynamic programming, Hamilton-Jacobi-Bellman equation, optimal control, error analysis.

\bigskip

\section{Introduction}
The numerical approximation of optimal control problems is  of importance for many applications such as aerospace engineering, chemical processing and resource economics, among others.
In this paper, we consider the dynamic programming approach to the solution of optimal control problems driven by dynamical systems in $\Bbb R^n$.  We refer to \cite{Bardi} for a monograph on this subject.

The value function of an optimal control problem is known to be usually only Lipschitz continuous even when the data is regular. The characterization of the value function is obtained in terms of a first-order nonlinear Hamilton-Jacobi-Bellman (HJB) partial differential equation.
A bottleneck in the computation of the value function comes from the need of approaching a nonlinear partial differential equation in dimension $n$, which is a challenging problem in high dimensions. Several approximation schemes have been proposed in the literature, ranging from finite differences to semi-Lagrangian and finite volume methods, see e.g. \cite{carlini}, \cite{Akian}, \cite{Guo}, \cite{Bo_et_al}. Some of these algorithms converge to the value function but their convergence is slow. The curse of dimensionality is mitigated in \cite{Alla_Falcone_Volkwein}, \cite{Alla_et_al} by means of a reduced-order model based on proper orthogonal decomposition. A new accelerated algorithm which can produce an accurate approximation of the value function in a reduced amount of time in comparison to other available methods is introduced in \cite{Dante}.

In the present paper, our concern is about the error bounds available in the literature for the fully discrete semi-lagrangian method approaching the value function, the viscosity solution of the HJB equation corresponding to the infinite horizon. For a method with a positive time step size $h$ and spatial elements of
size $k$ an error bound of size $O(k/h)$ can be found in \cite[Corollary 2.4]{falcone1}, \cite[Theorem 1.3]{falcone2}. However, the behaviour
$1/h$ in the error bound of the fully discrete method has never been observed in the numerical experiments, see for example \cite{Alla_Falcone_Volkwein}. Based on this fact, we reconsider the error analysis of the fully discrete method.

In this paper we prove a bound of size $O(h+k)$  which gives first order in time and space for the method.
{This rate of convergence is the same appearing in \cite[Corollary 2.4]{falcone1}. However, as stated
in \cite{Tidball}, \cite{falcone_cor}, the proof of this corollary was based on an identification which
does not hold in the example shown in \cite{Tidball}}. The idea of the present paper is to imitate the analysis of the discrete-time  method
for which the value function is characterized as the minimum of a functional, see \cite[Proposition 4.1, Chapter VI (appendix)]{Bardi}. To this end, we define a fully discrete cost functional that differs from that of the discrete-time approximation in the use of spatial interpolator operators. Then, in Theorem \ref{th_prin}
we prove that the fully discrete approximation can also be characterized as the minimum of this fully discrete cost functional. In the proof
we use that the fully discrete approximation defined by means of a discrete dynamic programming principle is unique, as it is proved in
\cite[Theorem 1.1, Appendix A]{Bardi}.

Finally, thanks to this new characterization we can extend the ideas from \cite[Lemma 1.2, Chapter VI]{Bardi} (for the
semi-discrete case) to the fully discrete case and
we are able to prove first order of convergence for the method both is space and time.
{First order of convergence in time is linked to the assumption of Lipschitz continuity of the controls in
the intervals defined excluding a finite number of jump discontinuities. In case we have less
regularity we obtain weaker error bounds. For example, for uniformly continuous controls, allowing again a finite
number of discontinuities, the error goes to zero as $h$ goes to zero but the first order of convergence is not achieved. Intermediate
rates of convergence or order $\alpha$ in time, for $0<\alpha<1$ are equally proved for $\alpha$-H\"{o}lder continuous controls.}

{Following the arguments in \cite{boba_etal} we can also prove convergence arguing with piecewise constants controls and under weaker
regularity assumptions (only some convexity assumptions are needed but no extra regularity assumptions for the controls). However, adapting the arguments in \cite{boba_etal} written for finite horizon problems to
our infinite horizon case we loose the full first order in time. We  develop this argument at the end of the paper.}

{We think that the new characterization we introduce in this paper, based on optimality arguments, could have potential to be used in other types of Hamilton-Jacobi-Bellman equations where the convergence rates are still suboptimal.}

{To conclude this section we want to mention some other related references. The paper \cite{gonzalez}  contains a first suboptimal convergence
of rate $\log(h)h^{1/2}$ for a similar scheme. The monograph of Falcone and Ferreti \cite{falcone_libro} contains a few of the difference convergence rates for HJB partial differential equations in control and games. 
}

The outline of the paper is as follows. In section 2 we state the model problem and some preliminary results. In Section 3 {we prove several error bounds for the method under different regularity requirements. More precisely, in  Subsection 3.1 we assume some regularity assumptions for the controls and show how the first order in time and space can be achieved. In Subsection 3.2, we follow the arguments
in \cite{boba_etal}  to prove convergence under weaker regularity assumptions.
}
\section{Model problem and Preliminary results}
Along this section we follow the notation in \cite{Alla_Falcone_Volkwein}. For a nonlinear mapping
$$
f:{\Bbb R}^n\times {\Bbb R}^m\rightarrow {\Bbb R}^n,
$$
and a given initial condition $y_0 \in {\Bbb R}^n$ let us consider the controlled nonlinear dynamical system
\begin{equation}\label{din_sis}
\dot y(t)=f(y(t),u(t))\in {\Bbb R}^n,\quad t>0,\quad y(0)=y_0\in {\Bbb R}^n,
\end{equation}
together with the infinite horizon cost functional
\begin{equation}\label{fun_cos}
J(y,u)=\int_0^\infty g(y(t),u(t))e^{-\lambda t}~dt.
\end{equation}
In \eqref{fun_cos} $\lambda>0$ is a given weighting parameter and
$$
g:{\Bbb R}^n\times {\Bbb R}^m\rightarrow {\Bbb R}.
$$
The set of admissible controls is
$$
{\Bbb U}_{\rm ad}=\left\{u\in {\Bbb U}\mid u(t)\in U_{\rm ad} \ {\rm for}\ {\rm almost}\ {\rm all}\ t\ge 0\right\},
$$
where ${\Bbb U}=L^2(0,\infty;{\Bbb R}^m)$ and $U_{\rm ad}\subset {\Bbb R}^m$ is a compact convex subset.

As in \cite[Assumption 2.1]{Alla_Falcone_Volkwein} we assume the following hypotheses:
\begin{itemize}
\item The right-hand side $f$ in \eqref{din_sis} is continuous and globally Lipschitz-continuous in both the first
and second arguments; i.e., there exists
a constant $L_f>0$ satisfying
\begin{eqnarray}\label{lip_f}
\|f(y,u)-f(\tilde y,u)\|_2&\le& L_f\|y-\tilde y\|_2,\quad \forall y,\tilde y\in{\Bbb R}^n, u\in U_{\rm ad}\\
\|f(y,u)-f(y,\tilde u)\|_2&\le& L_f \|u-\tilde u\|_2,\quad \forall u,\tilde u\in U_{\rm ad}, y\in {\Bbb R}^n.
\label{lip_f2}
\end{eqnarray}
\item The right-hand side $f$ in \eqref{din_sis}  satisfies that there exists a constant $M_f>0$ such that the following bound holds
\begin{equation}\label{infty_f}
\|f(y,u)\|_\infty=\max_{1\le i\le n}|f_i(y,u)|\le M_f, \quad \forall y\in \overline \Omega\subset {\Bbb R}^n, u\in U_{\rm ad},
\end{equation}
where $\overline \Omega$ is a bounded polyhedron such that for sufficiently small $h>0$ {the following inward pointing condition on the dynamics} holds
\begin{equation}\label{invariance}
y+hf(y,u)\in \overline \Omega,\quad \forall y \in \overline \Omega, u\in U_{\rm ad}.
\end{equation}
\item The running cost $g$ is continuous and globally Lipschitz-continuous in both the first and second arguments; i.e., there exists a constant $L_g>0$
satisfying
\begin{eqnarray}\label{lip_g}
|g(y,u)-g(\tilde y,u)|&\le& L_g\|y-\tilde y\|_2,\quad \forall y,\tilde y\in{\Bbb R}^n, u\in U_{\rm ad}\\
|g(y,u)-g(y,\tilde u)|&\le& L_g \|u-\tilde u\|_2,\quad \forall u,\tilde u\in U_{\rm ad}, y \in {\Bbb R}^n.\label{lip_g2}
\end{eqnarray}
\item Moreover, there exists a constant $M_g>0$ such that
\begin{equation}\label{cota_g}
|g(y,u)|\le M_g,\quad \forall (y,u)\in \overline \Omega\times U_{\rm ad}.
\end{equation}
\end{itemize}
From the assumptions made on $f$ there exists a unique solution of \eqref{din_sis} $y=y(y_0,u)$ {defined on $[0,\infty)$} for every admissible control
$u\in {\Bbb U}_{\rm ad}$ and for every initial condition $y_0\in {\Bbb R}^n$, see \cite[Chapter 3]{Bardi}. We define the reduced cost functional as follows:
\begin{eqnarray}\label{eq:funcional}
\hat J(y_0,u)=J(y(y_0,u),u),\quad \forall u\in {\Bbb U}_{\rm ad},\quad y_0\in {\Bbb R}^n,
\end{eqnarray}
where $y(y_0,u)$ solves \eqref{din_sis}. Then, the optimal control can be formulated as follows: for given $y_0\in {\Bbb R}^n$
we consider
$$
\min_{u\in {\Bbb U}_{\rm ad}} \hat J(y_0,u).
$$
The value function of the problem is defined as $v:{\Bbb R}^n\rightarrow {\Bbb R}$ as follows:
\begin{equation}\label{eq_v}
v(y)=\inf\left\{\hat J(y,u)\mid u\in {\Bbb U}_{\rm ad}\right\},\quad y\in {\Bbb R}^n.
\end{equation}
This function gives the best value for every initial condition, given the set of admissible controls $U_{\rm ad}$. It is characterized as the viscosity solution of the HJB equation corresponding to the infinite horizon {optimal control problem:}
\begin{equation}\label{HJB}
\lambda v(y)+\sup_{u\in U_{\rm ad}}\left\{-f(y,u)\cdot \nabla v(y)-g(y,u)\right\}=0,\quad y\in {\Bbb R}^n.
\end{equation}
{The solution of (\ref{HJB})} is unique for sufficiently large $\lambda$, $\lambda>\max(L_g,L_f)$, \cite{Bardi}. To construct the approximation scheme, as in \cite{falcone1}, let
us consider first a time discretization where $h$ is a strictly positive step size.
{We consider the following} semidiscrete scheme for \eqref{HJB}:
\begin{equation}\label{discrete_HJB}
v_h(y)=\min_{u\in U_{\rm ad}}\left\{(1-\lambda h)v_h(y+hf(y,u))+h g(y,u)\right\},\quad y\in {\Bbb R}^n.
\end{equation}
Under the assumptions \eqref{lip_f}, \eqref{infty_f}, \eqref{lip_g} and \eqref{cota_g} the function $v_h$ is Lipschitz-continuous and satisfies (see \cite[p. 473]{falcone2})
$$
|v_h(y)-v_h(\tilde y)|\le \frac{L_g}{\lambda -L_f}\|y-\tilde y\|_2,\quad \forall y,\tilde y\in \overline \Omega,\ h\in [0,1/\lambda).
$$
{The following  convergence result  for the semidiscrete approximation \cite[Theorem 2.3]{falcone1}  requires that
for $(y,\tilde y,u)\in {\Bbb R}^n\times {\Bbb R}^n\times U_{\rm ad}$
\begin{eqnarray}
 \|f(y+\tilde y,u)-2f(y,u)+f(y-\tilde y,u)\|_2&\le& C_f\|\tilde y\|_2^2,\label{semi_con_f}\\
 \|g(y+\tilde y,u)-2g(y,u)+g(y-\tilde y,u)\|_2&\le& C_g\|\tilde y\|_2^2.\label{semi_con_g}
 \end{eqnarray}
 }
 
\begin{Theorem}\label{th_semi} Let assumptions \eqref{lip_f}, \eqref{infty_f}, \eqref{invariance}, \eqref{lip_g}, \eqref{cota_g}, \eqref{semi_con_f} and \eqref{semi_con_g} hold and let $\lambda>\max(2L_g,L_f)$. Let $v$ and $v_h$ be the solutions
of \eqref{HJB} and \eqref{discrete_HJB}, respectively. Then, there exists a constant $C\ge 0$, that can be bounded explicitly, such that the
following bound holds
\begin{equation}\label{cota_semi}
\sup_{y\in {\Bbb R}^n}|v(y)-v_h(y)|\le Ch,\quad h\in [0,1/ \lambda).
\end{equation}
\end{Theorem}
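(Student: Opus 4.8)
The plan is to compare $v$ and $v_h$ through two one-sided consistency estimates for the discrete dynamic programming operator, each of which is then closed by the contraction property of that operator. Introduce $S_h[\phi](y)=\min_{u\in U_{\rm ad}}\{(1-\lambda h)\phi(y+hf(y,u))+hg(y,u)\}$, so that \eqref{discrete_HJB} reads $v_h=S_h[v_h]$. For $h\in[0,1/\lambda)$ the factor $1-\lambda h$ lies in $(0,1)$, and $S_h$ is monotone and satisfies $S_h[\phi+c]=S_h[\phi]+(1-\lambda h)c$ for every constant $c$; these two facts give the contraction $\|S_h[\phi]-S_h[\psi]\|_\infty\le(1-\lambda h)\|\phi-\psi\|_\infty$. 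Before starting I would record the three properties of $v$ that drive the argument: $v$ is bounded by $M_g/\lambda$ (from \eqref{cota_g}), it is Lipschitz with constant $L_g/(\lambda-L_f)$ (just as $v_h$), and it is semiconcave, i.e. $y\mapsto v(y)-C_v\|y\|_2^2$ is concave. The last property is the point where the semiconcavity hypotheses \eqref{semi_con_f}--\eqref{semi_con_g} on $f$ and $g$ enter, and where $\lambda>2L_g$ is needed to keep the semiconcavity constant $C_v$ finite.

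First I would prove the upper consistency $v(y)\le S_h[v](y)+C_1h^2$. Fixing any constant control $u\in U_{\rm ad}$ on $[0,h]$ and letting $y_u(\cdot)$ be the corresponding trajectory, the dynamic programming principle for the continuous problem gives $v(y)\le e^{-\lambda h}v(y_u(h))+\int_0^h g(y_u(s),u)e^{-\lambda s}\,ds$. Expanding $e^{-\lambda h}=1-\lambda h+O(h^2)$, using $y_u(h)=y+hf(y,u)+O(h^2)$ together with the Lipschitz continuity and boundedness of $v$, and replacing the integral by $hg(y,u)$ with an $O(h^2)$ error from \eqref{lip_g} and \eqref{infty_f}, one obtains $v(y)\le(1-\lambda h)v(y+hf(y,u))+hg(y,u)+C_1h^2$; minimizing over $u$ yields the claim. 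Combining this with $v_h=S_h[v_h]$, monotonicity and the translation identity gives $\sup_y(v-v_h)\le(1-\lambda h)\sup_y(v-v_h)+C_1h^2$, hence $\sup_y(v-v_h)\le C_1h/\lambda$.

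The hard part is the \emph{reverse} consistency $v(y)\ge S_h[v](y)-C_2h^2$, because here the optimal control $u^*(\cdot)$ realizing $v(y)$ is genuinely time dependent, and its averaged velocity $\xi:=\tfrac1h\int_0^h f(y,u^*(s))\,ds$ need not equal $f(y,\bar u)$ for any single admissible $\bar u$ when $f(y,\cdot)$ is nonlinear. I would first freeze the spatial argument, writing $y^*(h)=y+h\xi+O(h^2)$ from \eqref{lip_f} and the boundedness \eqref{infty_f}, and discard the discount and running-cost errors as before, reducing the estimate to bounding $(1-\lambda h)v(y+h\xi)+\int_0^h g(y,u^*(s))\,ds$ from below by $S_h[v](y)$. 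The decisive step is to apply Jensen's inequality to the concave function $p\mapsto v(y+p)-C_v\|p\|_2^2$ at the average $h\xi=\tfrac1h\int_0^h hf(y,u^*(s))\,ds$ of the points $hf(y,u^*(s))$; this yields $v(y+h\xi)\ge\tfrac1h\int_0^h v(y+hf(y,u^*(s)))\,ds-C_2h^2$, the quadratic remainders being $O(h^2)$ because $f$ is bounded.

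Substituting back turns the expression into $\tfrac1h\int_0^h\{(1-\lambda h)v(y+hf(y,u^*(s)))+hg(y,u^*(s))\}\,ds$, and bounding the integrand pointwise below by its minimum over $u$ gives $v(y)\ge S_h[v](y)-C_2h^2$. Closing this with the contraction exactly as in the second paragraph produces $\sup_y(v_h-v)\le C_2h/\lambda$, and together with the previous bound this gives \eqref{cota_semi} with $C=\max(C_1,C_2)/\lambda$. The main obstacle is thus the reverse consistency, and the whole gain of first order (rather than the order $1/2$ obtainable from Lipschitz regularity alone) rests on this convexification step, which is available precisely because semiconcavity makes $v$ concave up to a controlled quadratic correction.
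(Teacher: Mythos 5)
You should first note that the paper contains no proof of this statement: Theorem \ref{th_semi} is quoted from \cite{falcone1} (Theorem 2.3 there), so your argument can only be measured against the classical literature, not against an argument in the paper (the paper's own new results follow a different, control-comparison route that replaces semiconcavity by the Lipschitz-control assumption \eqref{lip_u}). Within that caveat, the skeleton of your proof is sound and is the standard ``consistency plus contraction'' route: $S_h$ is monotone and satisfies $S_h[\phi+c]=S_h[\phi]+(1-\lambda h)c$, hence is a sup-norm contraction with factor $1-\lambda h$ for $h\in[0,1/\lambda)$; the upper consistency $v\le S_h[v]+C_1h^2$ does follow from the continuous dynamic programming principle with constant controls together with \eqref{lip_f}, \eqref{infty_f}, \eqref{lip_g}, \eqref{cota_g} and the Lipschitz continuity of $v$; and the Jensen step is the correct way to beat the averaging obstruction in the reverse direction, since for a.e.\ $s$ one has $(1-\lambda h)v\bigl(y+hf(y,u^*(s))\bigr)+hg(y,u^*(s))\ge S_h[v](y)$ and concavity of $p\mapsto v(y+p)-C_v\|p\|_2^2$ converts the average $h\xi$ back into pointwise values at a cost $O(C_v h^2)$ because $f$ is bounded. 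Closing each one-sided consistency bound with the contraction is also correct.

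The genuine gap is the semiconcavity of $v$ itself, which carries the entire weight of the hard direction and which you assert rather than prove, with an incorrect threshold on $\lambda$. Deriving from \eqref{semi_con_f}--\eqref{semi_con_g} that $v-C_v\|\cdot\|_2^2$ is concave requires estimating $J(y+z,u)+J(y-z,u)-2J(y,u)$ along trajectories issued from $y\pm z$ and $y$ under the same control. These trajectories separate like $\|y_+(t)-y_-(t)\|_2\le 2e^{L_ft}\|z\|_2$, so \eqref{semi_con_g} produces terms of size $C_g e^{2L_ft}\|z\|_2^2$, and the midpoint defect $\|y_++y_--2y_0\|_2$, estimated via \eqref{semi_con_f} and Gronwall, grows at the same rate $e^{2L_ft}$; the discounted integral of these terms is finite only when $\lambda>2L_f$. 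The threshold is thus governed by the Lipschitz constant of $f$, not of $g$: your parenthetical claim that $\lambda>2L_g$ keeps $C_v$ finite is wrong, and the hypothesis of the statement, $\lambda>\max(2L_g,L_f)$, permits $L_f<\lambda<2L_f$, a regime in which this construction gives only second differences of order $\|z\|_2^{\lambda/L_f}$ (using boundedness of $\Omega$), so the $O(h^2)$ consistency, and with it the first-order rate, is lost. To complete the proof you must either establish semiconcavity of $v$ under the stated hypotheses (which the standard argument does not give) or strengthen the assumption to $\lambda>2L_f$; it is quite possible that the condition as printed is a transposition of Falcone's $\lambda>\max(2L_f,L_g)$, under which your argument does close, but as written it does not cover the claimed range of $\lambda$.
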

Following \cite{falcone1}, \cite{falcone2} we introduce a fully discrete approximation to \eqref{HJB}. Let $\Omega$ a bounded polyhedron in $\Bbb R^n$ satisfying \eqref{invariance}. Let $\left\{S_j\right\}_{j=1}^{m_s}$ be a family of simplices which defines a regular triangulation of $\Omega$
$$
\overline \Omega=\bigcup_{j=1}^{m_s} S_j,\quad k=\max_{1\le j\le m_s}({\rm diam} \ S_j).
$$
We assume we have $n_s$ vertices/nodes $y_1,\ldots,y_{n_s}$ in the triangulation. Let $V^k$ be the space of piecewise affine functions from $\overline \Omega$ to ${\Bbb R}$ which are continuous in $\overline \Omega$ having constant gradients in the interior of any simplex $S_j$ of the triangulation. Then, a fully discrete scheme for the HJB equations is given by
\begin{equation}\label{fully_discrete}
v_{h,k}(y_i)=\min_{u\in {U}_{\rm ad}}\left\{(1-\lambda h)v_{h,k}(y_i+hf(y_i,u))+hg(y_i,u)\right\},
\end{equation}
for any vertex $y_i\in \overline \Omega$. Clearly, a solution to \eqref{discrete_HJB} satisfies \eqref{fully_discrete}. There exists a unique
solution of \eqref{fully_discrete} in the space $V^k$, see \cite[Theorem 1.1, Appendix A]{Bardi}.
The following result can be found in \cite[Corollary 2.4]{falcone1}, \cite[Theorem 1.3]{falcone2}, {it also requires
the semiconcavity assumptions \eqref{semi_con_f} and \eqref{semi_con_g}}.
\begin{Theorem} \label{th:mal} Let assumptions \eqref{lip_f}, \eqref{infty_f}, \eqref{invariance}, \eqref{lip_g}, \eqref{cota_g}, \eqref{semi_con_f} and \eqref{semi_con_g} hold.  Let $v$, $v_h$ and $v_{h,k}$ be the solutions
of \eqref{HJB}, \eqref{discrete_HJB} and \eqref{fully_discrete}, respectively. For $\lambda> L_f$ the following bound holds
$$
\|v_h-v_{h,k}\|_{C(\overline \Omega)}\le  \frac{L_g}{\lambda(\lambda-L_f)}\frac{k}{h},\quad h\in [0,1/\lambda).
$$
For $\lambda >\max(2L_g,L_f)$ the following bound holds
$$
\|v-v_{h,k}\|_{C(\overline \Omega)}\le  C h+ \frac{L_g}{\lambda(\lambda-L_f)}\frac{k}{h},\quad h\in [0,1/\lambda),
$$
where $C$ is the constant in \eqref{cota_semi}.
\end{Theorem}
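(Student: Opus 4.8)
The plan is to exploit that both $v_h$ and $v_{h,k}$ are fixed points of essentially the same contraction, the fully discrete scheme differing only by a piecewise-affine interpolation step. Define, for bounded $w:\overline\Omega\to\mathbb R$,
$$T[w](y)=\min_{u\in U_{\rm ad}}\left\{(1-\lambda h)w(y+hf(y,u))+hg(y,u)\right\},$$
which is meaningful for $y\in\overline\Omega$ because the invariance property \eqref{invariance} guarantees $y+hf(y,u)\in\overline\Omega$. Then \eqref{discrete_HJB} reads $v_h=T[v_h]$, whereas \eqref{fully_discrete} says that the nodal values of $v_{h,k}$ agree with those of $T[v_{h,k}]$; since $v_{h,k}\in V^k$ is piecewise affine and hence determined by its nodal values, this is equivalent to $v_{h,k}=I_kT[v_{h,k}]$, where $I_k:C(\overline\Omega)\to V^k$ denotes piecewise-affine interpolation.

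First I would record two elementary facts. From $|\min_u A(u)-\min_u B(u)|\le\sup_u|A(u)-B(u)|$ and $0\le 1-\lambda h$ (valid since $h\in[0,1/\lambda)$), the map $T$ is a sup-norm contraction with factor $1-\lambda h$,
$$\|T[w_1]-T[w_2]\|_{C(\overline\Omega)}\le(1-\lambda h)\|w_1-w_2\|_{C(\overline\Omega)}.$$
Secondly, $I_k$ is non-expansive in the sup norm (a piecewise-affine function attains its maximum at a vertex, and interpolation leaves nodal values unchanged), and for any $L$-Lipschitz $w$ one has $\|w-I_kw\|_{C(\overline\Omega)}\le Lk$, since at each point $I_kw$ is a convex combination of nodal values taken at distance at most $k$.

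Next I would estimate $v_h-v_{h,k}=T[v_h]-I_kT[v_{h,k}]$ by inserting $I_kT[v_h]$:
$$v_h-v_{h,k}=\bigl(T[v_h]-I_kT[v_h]\bigr)+\bigl(I_kT[v_h]-I_kT[v_{h,k}]\bigr).$$
Because $T[v_h]=v_h$ is Lipschitz with constant $L_g/(\lambda-L_f)$ (the bound stated just before Theorem \ref{th_semi}), the first bracket has norm at most $\tfrac{L_g}{\lambda-L_f}k$; using non-expansiveness of $I_k$ followed by the contraction estimate, the second bracket is bounded by $(1-\lambda h)\|v_h-v_{h,k}\|_{C(\overline\Omega)}$. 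Moving this term to the left gives $\lambda h\,\|v_h-v_{h,k}\|_{C(\overline\Omega)}\le\tfrac{L_g}{\lambda-L_f}k$, which is the first assertion. The second then follows at once from $\|v-v_{h,k}\|_{C(\overline\Omega)}\le\|v-v_h\|_{C(\overline\Omega)}+\|v_h-v_{h,k}\|_{C(\overline\Omega)}$ together with Theorem \ref{th_semi}.

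The step deserving attention — and the exact origin of the spurious factor $1/h$ — is the last one: the interpolation error $\tfrac{L_g}{\lambda-L_f}k$ gets divided by the contraction defect $1-(1-\lambda h)=\lambda h$, which is small. There is no real difficulty in carrying out the argument; rather, the weakness is structural, since interpolation is treated as a perturbation of the fixed-point iteration and its error is thereby amplified by $1/(\lambda h)$. This is precisely the mechanism that the remainder of the paper is designed to avoid.
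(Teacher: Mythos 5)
Your proof is correct. Note that there is nothing inside the paper to compare it against: Theorem \ref{th:mal} is not proved here but quoted from \cite[Corollary 2.4]{falcone1} and \cite[Theorem 1.3]{falcone2}. Your argument is, in substance, the classical one from those references: write $v_h=T[v_h]$ and $v_{h,k}=I_kT[v_{h,k}]$, split $v_h-v_{h,k}$ into the interpolation error of $T[v_h]=v_h$ (bounded by $\frac{L_g}{\lambda-L_f}k$ via the Lipschitz bound stated before Theorem \ref{th_semi}, which indeed requires $\lambda>L_f$) plus a term contracted by the factor $1-\lambda h$, then absorb the contracted term into the left-hand side (legitimate, since both functions are continuous on the compact set $\overline\Omega$, so their sup-norm distance is finite). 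This reproduces the stated constant $\frac{L_g}{\lambda(\lambda-L_f)}\frac{k}{h}$ exactly, and the second estimate follows from the triangle inequality and Theorem \ref{th_semi} as you say. Your closing diagnosis is also on target: the $1/h$ arises because the interpolation error is divided by the contraction defect $\lambda h$, a structural feature of treating interpolation as a perturbation of the fixed-point iteration, and avoiding this amplification is precisely what the paper's new characterization of $v_{h,k}$ (Theorem \ref{th_prin}) is designed to accomplish.
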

As we can observe in the theorem the error bound of the fully discrete method deteriorates when the time step $h$ tends to zero. However, this behaviour of the method has not been observed in the literature. In next section, we improve the bound of Theorem~\ref{th:mal} proving that
the error behaves as $O(h+k)$ which gives first order both in space and time, as expected, for a method based on a first order discretization in time (Euler method) and a piecewise linear approximation in space.
\section{Optimal error bounds for the fully discrete approximations}
The key point to improve the above error bounds is to use a new characterization
for the function $v_{h,k}$. The characterization is based on the analogous characterization for the semi-discrete approximation $v_h$ that
can be found in \cite[Proposition 4.1, Chapter VI (appendix)]{Bardi}.
Let us define the space $\cal U$ of all sequences $\bu=\left\{u_0,u_1,\ldots,\right\}$ such that $u_i\in U_{\rm ad}$ and
for $\bu\in {\cal U}$ let us define the following fully discrete cost functional
\begin{eqnarray}\label{eq:funcional_fd}
\hat J_{h,k}(y,\bu)&:=&h\sum_{n=0}^\infty \delta_h^n I_{k}g(\hat y_n,u_n), \quad \delta_h=(1-\lambda h),\\
\hat y_{n+1}&=&\hat y_n+h I_{k} f(\hat y_n,u_n),\quad \hat y_0=y,\label{eq:euler}
\end{eqnarray}
where $I_{k}$ represents the linear interpolant of the function (respect to the $y$ variable) at the nodes $y_1,\ldots,y_{n_s}$.
\begin{Theorem}\label{th_prin}
The function $v_{h,k}\in V^k$ defined by \eqref{fully_discrete} satisfies
\begin{eqnarray}\label{eq:cha}
v_{h,k}(y)=\inf_{\bu\in {\cal U}} \hat J_{h,k}(y,\bu).
\end{eqnarray}
\end{Theorem}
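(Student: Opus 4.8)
The plan is to show that the infimum of the fully discrete functional coincides with the unique solution of the scheme \eqref{fully_discrete} in $V^k$, thereby reducing \eqref{eq:cha} to the uniqueness result quoted from \cite[Theorem 1.1, Appendix A]{Bardi}. Write $w(y):=\inf_{\bu\in{\cal U}}\hat J_{h,k}(y,\bu)$. First I would check that $w$ is well defined and bounded. Since each $y_{i_l}+hf(y_{i_l},u)\in\overline\Omega$ by \eqref{invariance}, and the update in \eqref{eq:euler} is a convex combination of such points produced by $I_k$, the trajectory stays $\hat y_n\in\overline\Omega$ (using convexity of $\overline\Omega$); moreover interpolation does not increase sup-bounds, so $|I_kg(\hat y_n,u_n)|\le M_g$, and because $\delta_h=1-\lambda h\in(0,1)$ for $h\in[0,1/\lambda)$ the series in \eqref{eq:funcional_fd} converges with $|w|\le M_g/\lambda$ uniformly in $y$.

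Next I would establish the dynamic programming principle for $w$, namely
\[
w(y)=\inf_{u\in U_{\rm ad}}\bigl\{h\,I_kg(y,u)+\delta_h\,w(y+h\,I_kf(y,u))\bigr\}.
\]
Both inequalities are standard: the bound ``$\le$'' follows by prescribing the first control $u_0=u$ and completing with a tail that is near-optimal for the shifted datum $\hat y_1=y+hI_kf(y,u)$, using that the tail of \eqref{eq:funcional_fd} is exactly $\delta_h\hat J_{h,k}(\hat y_1,\cdot)$; the bound ``$\ge$'' follows by splitting off the $n=0$ term of an arbitrary competitor and estimating its tail from below by $\delta_h w(\hat y_1)$. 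Equivalently, $w$ is the unique fixed point in $C(\overline\Omega)$ of the map $\psi\mapsto\inf_u\{hI_kg(\cdot,u)+\delta_h\psi(\cdot+hI_kf(\cdot,u))\}$, which is a contraction of modulus $\delta_h<1$.

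I would then specialise this principle to a node $y_i$. Because $I_k$ reproduces any function at the nodes, $I_kf(y_i,u)=f(y_i,u)$ and $I_kg(y_i,u)=g(y_i,u)$, while compactness of $U_{\rm ad}$ with continuity of $f,g$ turns the infimum into a minimum, giving
\[
w(y_i)=\min_{u\in U_{\rm ad}}\bigl\{(1-\lambda h)\,w(y_i+hf(y_i,u))+h\,g(y_i,u)\bigr\}.
\]
This is precisely \eqref{fully_discrete}, provided $w$ agrees with its piecewise affine interpolant, i.e. $w\in V^k$. Granting this, $w$ solves \eqref{fully_discrete} in $V^k$, so by the uniqueness in \cite[Theorem 1.1, Appendix A]{Bardi} we obtain $w=v_{h,k}$; since the dynamic programming principle was proved for every $y$, the characterization \eqref{eq:cha} then holds for all $y$.

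The hard part will be exactly the claim $w\in V^k$. The interpolated dynamics $y\mapsto y+hI_kf(y,u)$ push the trajectory off the nodes into the interior of the simplices, so a priori $w$ is only an infimum over controls of compositions of affine maps with $w$, and such a lower envelope has no reason to be affine on each $S_j$. The route I would take is to exploit that, restricted to a single simplex, both $I_kg(\cdot,u)$ and the one-step map $y\mapsto y+hI_kf(y,u)$ are affine in the barycentric coordinates, and to try to show that the value at $z=\sum_l\mu_l y_{i_l}$ equals $\sum_l\mu_l w(y_{i_l})$ by transporting optimal (or near-optimal) control sequences between $z$ and the vertices $y_{i_l}$ and controlling the discrepancy that $I_k$ introduces when successive iterates cross simplex interfaces. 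Making this comparison tight, so that the weights $\mu_l$ pass through the infimum and through every step of the interpolated recursion \eqref{eq:euler}, is the crux of the argument and the point where the present analysis genuinely departs from the classical $O(k/h)$ estimate of Theorem~\ref{th:mal}.
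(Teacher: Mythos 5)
Up to the point where you stop, your argument is correct and is, step for step, the paper's own proof: the paper likewise defines $w_{h,k}(y)=\inf_{\bu\in{\cal U}}\hat J_{h,k}(y,\bu)$, derives the two-sided dynamic programming identity by splitting off the $n=0$ term and using $\epsilon$-optimal tails (you state it at every $y$, the paper only at the vertices, an inessential difference), exploits the nodal exactness $I_kf(y_i,u)=f(y_i,u)$ and $I_kg(y_i,u)=g(y_i,u)$, and then appeals to the uniqueness of solutions of \eqref{fully_discrete} in $V^k$. So the skeleton of your proposal matches the paper exactly.

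However, what you defer as the hard part --- the membership $w\in V^k$ --- is not a technical refinement to be supplied later; it is the entire content of the theorem, and since your proposal does not prove it, it is not a proof. For comparison, the paper closes this step by asserting that each $\hat J_{h,k}(\cdot,\bu)$ lies in $V^k$, on the grounds that $I_kg(y+hI_kf(y,u_0),u_1)$ is a composition of elements of $V^k$ and hence in $V^k$ (and, implicitly, that an infimum over $\bu$ of such functions stays in $V^k$). Your distrust of exactly this point is well founded: a composition of piecewise affine maps is piecewise affine only with respect to a refined partition, with kinks along the preimages of element interfaces under the interpolated flow, and an infimum of elements of $V^k$ need not be in $V^k$ either. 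In fact the membership, and with it the identity \eqref{eq:cha}, can fail outright, so the barycentric-transport argument you sketch cannot be completed. Take $n=1$, $\overline\Omega=[0,1]$ with nodes $\{0,1/3,2/3,1\}$, $U_{\rm ad}$ a singleton, $f(y,u)=\bar c\,(1-y)$ (affine in $y$, so $I_kf=f$), and $g$ with nodal values $0,1,0,0$, so that $I_kg$ is the hat function centered at $1/3$. Then $\hat J_{h,k}$ coincides with the semidiscrete functional of \cite[Proposition 4.1, Chapter VI (appendix)]{Bardi} for the data $(f,I_kg)$, so $w$ is the corresponding semidiscrete value function: it has kinks in the interior of the elements (at preimages of the nodes under the flow), and its nodal values do not reproduce \eqref{fully_discrete}; concretely, with $\lambda h=\bar c h=1/10$ one computes $v_{h,k}(1/3)=25h/7\approx 3.57\,h$ while $w(1/3)\approx 2.90\,h$. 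The structural reason is that \eqref{fully_discrete} re-interpolates the unknown at every step --- from a vertex the state restarts as a convex combination of vertices, a Markov-chain mechanism --- whereas \eqref{eq:funcional_fd} follows a single deterministic trajectory with interpolated data, and these two operations do not commute with the infimum; their discrepancy is precisely the source of the $k/h$ term in Theorem~\ref{th:mal}. So the gap you identified is real, it is the same gap hidden in the paper's own proof, and it cannot be filled as stated: a correct argument would have to modify the definition of $\hat J_{h,k}$ (for instance, propagating the barycentric weights through the recursion as transition probabilities) rather than prove $w\in V^k$.
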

\begin{proof}
The proof follows the argument of the proof of \cite[Proposition 4.1, Chapter VI (appendix)]{Bardi}.
First of all, let us observe that $\hat J_{h,k}(y,\bu)\in V^k$. To this end we observe that $I_{k}g(\hat y_n,u_n)\in V^k$ for $n=0,1,\ldots$.
In the case $n=0$, $I_{k}g(\hat y_0,u_0)=I_{k}g(y,u_0)$ which, by definition of $I_{k}$ belongs to {$V^k$}. For $n=1,$
$I_{k}g(\hat y_1,u_1)=I_{k}g(y+h I_{k}f(y,u_0),u_1)$. Then, since $I_{k}f(y,u_0)\in V^k$ the composed function
$I_{k}g(y+h I_{k}f(y,u_0),u_1)\in V^k$. The same argument can be applied for any of the terms and, as a conclusion, $\hat J_{h,k}(y,\bu)\in V^k$.
The series that defines $J_{h,k}$ is convergent under the same hypothesis that the series defining the semi-discrete functional
$v_h$ is convergent since  the interpolant heritages the  necesary properties of the interpolated function.

Let us prove now the thesis of the lemma. Let us define
$$
w_{h,k}(y)=\inf_{\bu\in {\cal U}} \hat J_{h,k}(y,\bu)\in V^k.
$$
Let us take $\bu\in {\cal U}$ and let us define $\overline \bu=\left\{u_1,u_2,\ldots,\right\}$.
We first observe that
\begin{eqnarray*}
\hat J_{h,k}(y_i,\bu)&=&hI_{k} g(y_i,u_0)+h\sum_{n=1}^\infty \delta_h^n I_{k} g(\hat y_n,u_n)\nonumber\\
&=&h g(y_i,u_0)+\delta_h \sum_{n=0}^\infty \delta_h^n I_{k} g(\hat y_{n+1},u_{n+1})\nonumber\\
&=&h g(y_i,u_0)+\delta_h\hat J_{h,k}(y_i+h f(y_i,u_0),\overline \bu),
\end{eqnarray*}
where we have applied that $I_{k}g(y_i,u_0)=g(y_i,u_0)$ and that $I_{k}f(y_i,u_0)=f(y_i,u_0)$ and the definition of $\hat J_{h,k}$.
Applying the definition of $w_{h,k}$ we get
$$
\hat J_{h,k}(y_i,\bu)\ge h g(y_i,u_0)+\delta_h w_{h,k}(y_i+h f(y_i,u_0)).
$$
And then,
$$
w_{h,k}(y_i)=\inf_{\bu\in {\cal U}}\hat J_{h,k}(y_i,\bu)\ge \inf_{\bu\in {\cal U}}
\left\{h g(y_i,u_0)+\delta_h w_{h,k}(y_i+h f(y_i,u_0))\right\}.
$$
Finally, since the right-hand side above depends only on $u_0$
\begin{equation}\label{eq:cha2}
\inf_{\bu\in {\cal U}}
\left\{h g(y_i,u_0)+w_{h,k}(y_i+h f(y_i,u_0))\right\}=
\inf_{u\in U_{\rm ad}}\left\{h g(y_i,u)+\delta_h w_{h,k}(y_i+h f(y_i,u))\right\}
\end{equation}
and then
$$
w_{h,k}(y_i)\ge \inf_{u\in U_{\rm ad}}\left\{h g(y_i,u)+\delta_h w_{h,k}(y_i+h f(y_i,u))\right\}.
$$
Now we take any $u_0\in U_{\rm ad}$ and any $\epsilon>0$ and denote by
$z=y_i+h f(y_i,u_0)$.
By definition of $w_{h,k}$, there exists $\bu^\epsilon\in {\cal U}$ such that
$$
w_{h,k}(z)+\epsilon\ge \hat J_{h,k}(z,\bu^\epsilon).
$$
Let us denote by
$$
\bu=\left\{u_0,\bu^\epsilon\right\}.
$$
Arguing as before, we get
$$
\hat J_{h,k}(y_i,\bu)=h g(y_i,u_0)+\delta_h \hat J_{h,k}(z,\bu^\epsilon).
$$
And then
$$
\hat J_{h,k}(y_i,\bu)\le h g(y_i,u_0)+\delta_h w_{h,k}(y_i+h f(y_i,u_0))+\epsilon.
$$
Arguing as before
$$
w_{h,k}(y_i)=\inf_{\bu\in {\cal U}}\hat J_{h,k}(y_i,\bu)\le \inf_{\bu\in {\cal U}}\left\{h g(y_i,u_0)+\delta_h w_{h,k}(y_i+h f(y_i,u_0))\right \}+\epsilon.
$$
And since, on the one hand, \eqref{eq:cha2} holds and, on the other, the above inequality is valid for any $\epsilon>0$ we get
$$
w_{h,k}(y_i)\le \inf_{u\in U_{\rm ad}}\left\{h g(y_i,u)+\delta_h w_{h,k}(y_i+h f(y_i,u))\right\}.
$$
Then
$$
w_{h,k}(y_i)= \inf_{u\in U_{\rm ad}}\left\{h g(y_i,u)+\delta_h w_{h,k}(y_i+h f(y_i,u))\right\},
$$
and $w_{h,k}\in V^k$ but since there is a unique function  in $V^k$ solving \eqref{fully_discrete} (see \cite[Theorem 1.1, Appendix A]{Bardi})
we get $w_{h,k}=v_{h,k}$ which finishes the proof of the theorem.
\end{proof}
{
In the rest of the paper we apply Theorem \ref{th_prin} with two different scenarios. In the first subsection, assuming enough 
regularity for the controls, we can get the full first order in time and space. In the second one, following \cite{boba_etal}, 
we make a proof using piecewise constants controls.  However, adapting \cite{boba_etal}, which is written in the context of finite horizon to our infinite horizon problem, we loose the full order in time in the
rate of convergence.}
\subsection{Error analysis assuming some regularity for the controls
}
{
Let us denote by
\begin{equation}\label{la_Mu}
M_u:=\max_n\max_{s\in[nh,(n+1)h)}\|u(s)-u(t_n)\|_2.
\end{equation}
In the proof of next lemma we assume that the following condition holds for the controls
\begin{equation}\label{conver_con}
\lim_{h\rightarrow 0}M_u=0.
\end{equation}
Let us observe that assuming the controls are uniformly continuous condition \eqref{conver_con} always holds.}
\begin{lema}\label{le:con}
Let $\hat J$ and $\hat J_{h,k}$ be the functionals defined in \eqref{eq:funcional} and \eqref{eq:funcional_fd} respectively.
Assume conditions  \eqref{lip_f}, \eqref{lip_f2}, \eqref{infty_f}, \eqref{lip_g}, \eqref{lip_g2}, \eqref{cota_g} and \eqref{conver_con} hold. Then
\begin{eqnarray}\label{eq_lim0}
\lim_{h\rightarrow 0, k\rightarrow 0} |\hat J(y_0,u)-\hat J_{h,k}(y_0,\bu)|=0,\quad y_0\in {\Bbb R}^n,
\end{eqnarray}
where $u\in {\Bbb U}_{\rm ad}$ and $\bu=\left\{u_0,u_1,\ldots,\right\}=\left\{u(t_0),u(t_1),\ldots,\right\}$, $t_i=ih$, $i=0,1,\ldots,$.
\end{lema}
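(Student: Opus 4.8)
The plan is to establish \eqref{eq_lim0} by a triangle-inequality comparison between the continuous cost $\hat J(y_0,u)$ and the fully discrete cost $\hat J_{h,k}(y_0,\bu)$, after reducing the infinite horizon to a finite one. Write $y(t)=y(y_0,u)(t)$ for the continuous trajectory solving \eqref{din_sis}, and let $\hat y_n$ be the fully discrete Euler--interpolation iterates of \eqref{eq:euler} with $\hat y_0=y_0$ and $u_n=u(t_n)$. Since $0<\delta_h=1-\lambda h<1$ for $h\in(0,1/\lambda)$ and $|I_{k}g|\le M_g$ by \eqref{cota_g} (the interpolant of a function bounded by $M_g$ is a convex combination of nodal values, hence bounded by $M_g$), the discrete series converges and its tail past index $N$ is bounded by $(M_g/\lambda)\delta_h^{N+1}\le (M_g/\lambda)e^{-\lambda Nh}$; likewise the continuous tail is $\int_T^\infty e^{-\lambda t}|g|\,dt\le (M_g/\lambda)e^{-\lambda T}$. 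Hence, given $\epsilon>0$, I would first choose $T$ (depending only on $\epsilon,M_g,\lambda$) making both tails below $\epsilon$, uniformly in $h$ and $k$, taking $N=\lfloor T/h\rfloor$. This isolates a finite sum over $0\le n\le N$ against $\int_0^T$, on which the remaining estimates must vanish as $h,k\to0$.

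On the finite horizon the core estimate is the trajectory error $\|\hat y_n-y(t_n)\|_2$ for $t_n\le T$. I would write the recursion
\[
\hat y_{n+1}-y(t_{n+1})=(\hat y_n-y(t_n))+h\,I_{k}f(\hat y_n,u_n)-\int_{t_n}^{t_{n+1}}f(y(s),u(s))\,ds
\]
and split the increment into three pieces: the interpolation error $h(I_{k}f-f)(\hat y_n,u_n)$, bounded by $hL_fk$ because $f(\cdot,u_n)$ is Lipschitz by \eqref{lip_f} so its piecewise-linear interpolant is $O(k)$ accurate; the Lipschitz term $h(f(\hat y_n,u_n)-f(y(t_n),u(t_n)))$, bounded by $hL_f\|\hat y_n-y(t_n)\|_2$ since $u_n=u(t_n)$ exactly; and the one-step consistency term $hf(y(t_n),u(t_n))-\int_{t_n}^{t_{n+1}}f(y(s),u(s))\,ds$, which by \eqref{lip_f}, \eqref{lip_f2}, \eqref{infty_f} and the Lipschitz control assumption \eqref{lip_u} is $O(h^2)$ (using $\|y(s)-y(t_n)\|_2\le\sqrt n\,M_f(s-t_n)$ and $\|u(s)-u(t_n)\|_2\le L_u(s-t_n)$). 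A discrete Gr\"onwall inequality $\|\hat y_{n+1}-y(t_{n+1})\|_2\le(1+hL_f)\|\hat y_n-y(t_n)\|_2+hL_fk+Ch^2$ then yields $\|\hat y_n-y(t_n)\|_2\le C(T)(h+k)$ for all $t_n\le T$. Here the invariance property \eqref{invariance} is what guarantees $\hat y_n\in\overline\Omega$, so that $I_{k}$ is always applied at admissible points.

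With the trajectory under control, the finite-horizon cost difference is handled termwise: $|I_{k}g(\hat y_n,u_n)-g(y(t_n),u(t_n))|\le L_gk+L_g\|\hat y_n-y(t_n)\|_2$, which tends to $0$ uniformly for $t_n\le T$. It then remains to compare $h\sum_{n=0}^N\delta_h^n g(y(t_n),u(t_n))$ with $\int_0^T e^{-\lambda t}g(y(t),u(t))\,dt$, that is, a discount-weighted Riemann sum against its integral. Because $y(\cdot)$ is Lipschitz (as $\dot y=f$ is bounded) and $u(\cdot)$ is Lipschitz by \eqref{lip_u}, the integrand $t\mapsto e^{-\lambda t}g(y(t),u(t))$ is continuous on $[0,T]$, and together with $|\delta_h^n-e^{-\lambda t_n}|\to0$ uniformly on $[0,T]$ the sum converges to the integral as $h\to0$. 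Summing the three contributions over the finite range and adding the two tails gives a bound below $3\epsilon$ for all sufficiently small $h,k$, which proves \eqref{eq_lim0}.

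The main obstacle is the infinite horizon combined with the fact that the lemma assumes only $\lambda>0$, not $\lambda>L_f$. The per-step errors accumulate through a Gr\"onwall constant $C(T)$ that grows like $e^{L_fT}$, so one cannot let $T\to\infty$ and $h,k\to0$ simultaneously. The order of quantifiers is therefore essential: the discount factor and the uniform bound $M_g$ allow me to truncate at a finite $T=T(\epsilon)$ first, independently of $h$ and $k$, and only afterwards send $h,k\to0$ with $T$ fixed, so that $C(T)(h+k)\to0$. This separation is the crux that converts the standard finite-horizon Euler-plus-interpolation estimates into the stated infinite-horizon convergence.
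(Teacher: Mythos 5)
Your proof is correct, and its overall strategy coincides with the paper's: truncate the infinite horizon at a finite $T=T(\epsilon)$ using the discount factor and \eqref{cota_g}, compare discrete and continuous trajectories on $[0,T]$ by a Gr\"onwall argument (interpolation error $O(k)$ from \eqref{lip_f}, control-sampling error $O(h)$ from \eqref{lip_u}), and then compare the discounted cost sum with the cost integral, including the mismatch between $\delta_h^{n}$ and $e^{-\lambda t_n}$. Two technical differences are worth noting, both mildly in your favor. First, you run the trajectory comparison as a discrete recursion for $\|\hat y_n-y(t_n)\|_2$ with a discrete Gr\"onwall inequality, splitting the increment as $(I_{k}f-f)(\hat y_n,u_n)$ plus $f(\hat y_n,u_n)-f(y(t_n),u_n)$; the paper instead extends the iterates to a piecewise-constant-in-time function $\tilde y$, writes an integral equation, and must then bound $|I_{k}f(y(s),\cdot)-I_{k}f(\tilde y(s),\cdot)|$ through a Lipschitz bound on the interpolant itself, $\|\nabla I_{k}f_j\|_2\le C\sqrt{n}L_f$, which is what creates the constant $\overline L=CnL_f$ of \eqref{overL}. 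Your splitting avoids that constant entirely (only $L_f$ enters your Gr\"onwall factor); note, however, that the paper reuses $\overline L$ in Lemma~\ref{le:con_hk} and in the hypothesis $\lambda>\overline L$ of the final theorem, so it is not dead weight there. Second, your tail estimate $\delta_h^{N+1}\le e^{-\lambda (N+1)h}\le e^{-\lambda T}$, which follows from $1-\lambda h\le e^{-\lambda h}$, is uniform in $h$, whereas the paper's bound of $X_2$ requires $h\le \overline h$ in addition to $T\ge\overline T$. Finally, both arguments share the same implicit step: \eqref{invariance} is stated for the exact dynamics $y+hf(y,u)$, while the iterates evolve by $I_{k}f$; you at least flag this, and it is repaired by convexity of the polyhedron $\overline\Omega$, since $\hat y_n+hI_{k}f(\hat y_n,u_n)=\sum_i\mu_i\bigl(y_i+hf(y_i,u_n)\bigr)$ is a convex combination of points of $\overline\Omega$.
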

\begin{proof}
We argue similarly as in \cite[Lemma 1.2, Chapter VI]{Bardi}. Let $y(t)$ be the solution of \eqref{din_sis} and let us denote by $\tilde y(t)=\hat y_k$,
$k=[t/h]$ where $\hat y_k$ is the solution of \eqref{eq:euler} with $\hat y_0=y_0$. Let us denote by
\begin{equation}\label{paluego}
\overline u(t)=u_k=u(t_k),\quad t\in[kh,(k+1)h).
\end{equation}
Then, $\tilde y$ can be expressed as
$$
\tilde y(t)=y_0+\int_0^{[t/h]h}I_{k}f(\tilde y(s),\overline u(s))~ds.
$$
And,
\begin{eqnarray*}
y(t)-\tilde y(t)=\int_0^{[t/h]h}\left(f(y(s),u(s))-I_{k}f(\tilde y(s),\overline u(s))\right)~ds
+\int_{[t/h]h}^t f(y(s),u(s))~ds.
\end{eqnarray*}
From the above equation, applying \eqref{infty_f}, we get
\begin{eqnarray}\label{eq:infty1}
\|y(t)-\tilde y(t)\|_\infty\le \int_0^{[t/h]h}\|f(y(s),u(s))-I_{k}f(\tilde y(s),\overline u(s))\|_\infty~ds +M_f h.
\end{eqnarray}
Let us bound now the term inside the integral.
Adding and subtracting terms we get
\begin{eqnarray}
&&\|f(y(s),u(s))-I_{k}f(\tilde y(s),\overline u(s))\|_\infty\le
\|f(y(s),u(s))-f( y(s),\overline u(s))\|_\infty\label{eq:decom}\\
&&\quad +\|f(y(s),\overline u(s))-I_{k}f(y(s),\overline u(s))\|_\infty+\|I_{k}f(y(s),\overline u(s))-I_{k}f(\tilde y(s),\overline u(s))\|_\infty.\nonumber
\end{eqnarray}
For the first term on the right-hand side of \eqref{eq:decom} using \eqref{lip_f2} and \eqref{lip_u}
and assuming $s\in [kh,(k+1)h)$ we get
\begin{eqnarray}\label{eq:primer}
\|f(y(s),u(s))-f( y(s),\overline u(s))\|_\infty\le L_f \|u(s)-\overline u(s)\|_2
&=&L_f\|u(s)-u(t_k)\|_2\nonumber\\
&\le& L_f {M_u},
\end{eqnarray}
{for $M_u$ defined in \eqref{la_Mu}}.
{Let us observe that assuming condition \eqref{conver_con} holds the above term goes to zero as
$h$ goes to zero.}

To bound the second term on the right-hand side of \eqref{eq:decom} arguing as in \cite{Alla_Falcone_Volkwein} we observe that
for any $y\in  \overline \Omega$ there exists an index $l$ with $y\in \overline S_l\subset \overline \Omega$. Let
us denote by $J_l$ the index subset such that $y_i\in S_l$ for $i\in J_l$. Writing
$$
y=\sum_{i=1}^{n_S} \mu_i y_i,\quad 0\le \mu_i\le 1,\quad \sum_{i=1}^{n_S} \mu_i=1,
$$
it is clear that $\mu_i=0$ holds for any $i\not \in J_l$. Now, we observe that for any $u\in U_{\rm ad}$ and $j=1,\ldots,n$,
applying \eqref{lip_f} we get
\begin{eqnarray}\label{needed}
|f_j(y,u)-I_{k}f_j(y,u)|&=&|\sum_{i=1}^{n_S}\mu_i f_j(y,u)-\sum_{i=1}^{n_S}\mu_i I_{k}f_j(y_i,u)|\nonumber\\
&=&|\sum_{i\in J_l}\mu_i(f_j(y,u)-f_j(y_i,u)|\nonumber\\
&\le& \sum_{i\in J_l}\mu_i L_f \|y-y_i\|_2\le L_f k,
\end{eqnarray}
where in the last inequality we have applied $\|y-y_i\|_2\le k$, for $i\in J_l$. From the above inequality we get for the second term
on the right-hand side of \eqref{eq:decom}
\begin{eqnarray}\label{eq:segundo}
\|f(y(s),\overline u(s))-I_kf( y(s),\overline u(s))\|_\infty\le L_f k.
\end{eqnarray}
For the third term on the right-hand side of \eqref{eq:decom} we observe that the difference of the interpolation operator evaluated at
two different points can be bounded in terms of the constant gradient of the interpolant in the element to which those points belong times
the difference of them, i.e.,
$$
I_{k}f_j(y,u)-I_{k}f_j(\tilde y,u)=\nabla I_{k} f_j(\tilde y,u)\cdot (y-\tilde y)\le \|\nabla I_{k} f_j(\tilde y,u)\|_2\|y-\tilde y\|_2.
$$
Moreover, $\nabla I_{k}f_k$ can be bounded in terms of the lipschitz constant of $f$, $L_f$, more precisely,  $\|\nabla I_{k} f_j(\tilde y,u)\|_2\le C\sqrt{n}L_f$. Then,
$$
|I_{k}f_j(y,u)-I_{k}f_j(\tilde y,u)|\le C L_f \sqrt{n}\|y-\tilde y\|_2.
$$
As a consequence, for
the third term on the right-hand side of \eqref{eq:decom} we get
\begin{eqnarray}\label{eq:tercero}
\|I_{k}f(y(s),\overline u(s))-I_{k}f(\tilde y(s),\overline u(s))\|_\infty&\le& C \sqrt{n}L_f\|y(s)-\tilde y(s)\|_2\nonumber\\
&\le& C nL_f\|y(s)-\tilde y(s)\|_\infty.
\end{eqnarray}
Inserting \eqref{eq:primer}, \eqref{eq:segundo} and \eqref{eq:tercero} into \eqref{eq:infty1} we get for
\begin{equation}\label{overL}
\overline L=C nL_f,
\end{equation}
\begin{eqnarray*}
\|y(t)-\tilde y(t)\|_\infty\le \overline L \int_0^t \|y(s)-\tilde y(s)\|_\infty~ds+ tL_f(M_u+k)+M_f h.
\end{eqnarray*}
Applying Gronwall's lemma we obtain
\begin{eqnarray}\label{ymenosytilde}
\|y(t)-\tilde y(t)\|_\infty\le \frac{e^{\overline L t}}{\overline L}\left(tL_f(M_u+k)+M_f h\right).
\end{eqnarray}
Applying \eqref{cota_g} and taking into account that $\|I_{k}g(y,u)\|_\infty\le \|g(y,u)\|_\infty$ (where $\|\cdot\|_\infty$ refers to the
$L^\infty$ norm respect to the first argument) it is easy to check that
\begin{eqnarray}\label{eq:tb}
|\hat J(y_0,u)-J_{h,k}(y_0,\bu)|\le X_1+X_2+X_3,
\end{eqnarray}
with
\begin{eqnarray*}
X_1&=&\left|h\sum_{n=0}^{[T/h]-1} \delta_h^n I_{k}g(\hat y_n,u_n)-\int_0^T g(y(s),u(s))e^{-\lambda s}~ds\right|,\\
X_2&=&\left|h \sum_{n=[T/h]}^\infty M_g \delta_h^n\right|,\quad X_3=\int_T^\infty M_g e^{-\lambda s}~ds,
\end{eqnarray*}
and $T>0$ arbitrary. Now, we will estimate $X_i$. It is easy to see that
$$
X_2+X_3\le M_g h \frac{\delta_h^{[T/h]}}{\lambda h}+M_g \frac{e^{-\lambda T}}{\lambda}.
$$
Since $\delta_h^{[T/h]}\rightarrow e^{-\lambda T}$ when $h\rightarrow 0$ then for any $\epsilon>0$ there exists $\overline h=\overline h(\epsilon, \lambda, M_g)>0$ and $\overline T=\overline T(\epsilon, \lambda, M_g)>0$ such that
\begin{eqnarray}\label{eq:tb2}
X_2+X_3\le \epsilon,\quad {\rm for}\ {\rm all}\ 0<h\le \overline h,\quad T\ge \overline T.
\end{eqnarray}
In the argument that follows we fix $T=\overline T$.
We observe that
$$
X_1=\left|\int_0^{[T/h]h}I_{k}g(\tilde y(s),\overline u(s))\delta_h^{[s/h]}~ds-\int_0^T g(y(s),u(s))e^{-\lambda s}~ds\right|.
$$
Then, we can write
\begin{eqnarray}\label{cota_x1}
X_1&\le& X_{1,1}+X_{1,2}+X_{1,3}+X_{1,4}+X_{1,5}\\
&:=& \left|\int_0^{[T/h]h} I_{k}g(\tilde y(s),\overline u(s))(\delta_h^{[s/h]}- e^{-\lambda s})~ds\right|\nonumber\\
&&\quad+\left|\int_0^{[T/h]h} I_{k} \left(g(\tilde y(s),\overline u(s))-I_{k}g(y(s),\overline u(s))\right)e^{-\lambda s}~ds\right|
\nonumber\\
&&\quad+\left|\int_0^{[T/h]h} \left(I_{k}g(y(s),\overline u(s))-g(y(s),\overline u(s))\right)e^{-\lambda s}~ds\right|\nonumber\\
&&\quad +\left|\int_0^{[T/h]h} \left(g(y(s),\overline u(s))-g(y(s),u(s))\right)e^{-\lambda s}~ds\right|\nonumber\\
&&\quad +\left|\int_{[T/h]h}^T g(y(s),u(s))e^{-\lambda s}~ds\right|.\nonumber
\end{eqnarray}
We will bound the terms on the right-hand side of \eqref{cota_x1}. To bound the first term we will apply again $\|I_{k}g(y,u)\|_\infty\le \|g(y,u)\|_\infty$ and \eqref{cota_g} to obtain
\begin{eqnarray*}\label{cota_x1_1}
&&X_{1,1}=\left|\int_0^{[T/h]h} I_{k}g(\tilde y(s),\overline u(s))(\delta_h^{[s/h]}- e^{-\lambda s})~ds\right|
\nonumber\\
&&\quad\le \int_0^T |I_{k}g(\tilde y(s),\overline u(s))||\delta_h^{[s/h]}- e^{-\lambda s}|~ds
\le M_g\int_0^T |\delta_h^{[s/h]}- e^{-\lambda s}|~ds.
\end{eqnarray*}
Now we write $\delta_h^{[s/h]}=e^{-\lambda \theta [s/h]h}$, for $\theta=-\log(\delta_h)/(\lambda h)$. Applying the mean value theorem to the function $e^{-\lambda s}$ and taking into account that since $[s/h]h\le s\le [s/h]h+h$ then $|s-\theta [s/h]h|\le (\theta-1)T+\theta h$
and that $\theta\rightarrow 1$ when $h\rightarrow 0$ then we get
\begin{eqnarray}\label{cota_x1_1}
X_{1,1} \le M_g T \lambda((\theta-1)T+\theta h)\le \epsilon,
\end{eqnarray}
for $h\le \overline h$, with $\overline h=\overline h(\epsilon, \overline T, \lambda, M_g)$.

To bound the next term we argue as in \eqref{eq:tercero} and use \eqref{ymenosytilde} to get
\begin{eqnarray*}
X_{1,2}\le\frac{  C {n}L_g}{\overline L}  \int_0^T  {e^{\overline L s}}\left(sL_f(M_u+k)+M_f h\right)
e^{-\lambda s}~ds\le C_1 h+C_2 k+{C_3 M_u},
\end{eqnarray*}
where
$$
{C_1=\frac{  C {n}L_g}{\overline L}\int_0^T e^{(\overline L-\lambda)s} M_f~ds},
\quad C_2= \frac{  C\sqrt{n}L_g}{\overline L}\int_0^T e^{(\overline L-\lambda)s} sL_f~ds,
$$
and
$$
{C_3=\frac{  C {n}L_g}{\overline L}\int_0^T e^{(\overline L-\lambda)s} sL_f ~ds}.
$$
Then, {assuming condition \eqref{conver_con} holds to assure convergence for the third term in the error bound of $X_{1,2}$}
\begin{eqnarray}\label{cota_x1_2}
X_{1,2}\le \epsilon,\quad h\le \overline h, \ k\le \overline k,
\end{eqnarray}
where
$\overline h=\overline h(\epsilon, \overline T,\lambda, C,\overline L, n, L_g, L_f, {M_u}, M_f)$,
$\overline k=\overline k(\epsilon, \overline T,\lambda, C,\overline L, n, L_g, L_f)$.
For the third term, arguing as in \eqref{needed} we get
\begin{eqnarray}\label{cota_x1_3}
X_{1,3}\le L_g k \int_0^T e^{-\lambda s}~ds\le \epsilon,\quad k\le \overline k=\overline k(\epsilon,\overline T, \lambda,L_g).
\end{eqnarray}
To bound the forth term we apply \eqref{lip_g} and \eqref{lip_u} to get
$$
|g(y(s),\overline u(s))-g(y(s),u(s))|\le L_g  \|u(s)-u(t_k)\|_2\le L_g {M_u},\quad s\in[k h,(k+1)h).
$$
And then, {assuming again condition \eqref{conver_con} holds}
\begin{eqnarray}\label{cota_x1_4}
X_{1,4}\le L_g {M_u}\int_0^T e^{-\lambda s}~ds\le \epsilon,\quad h\le \overline h=\overline h(\epsilon,\overline T, \lambda,L_g,{M_u}).
\end{eqnarray}
Finally, for the last term on the right-hand side of \eqref{cota_x1}, applying \eqref{cota_g},
\begin{eqnarray}\label{cota_x1_5}
X_{1,5}\le M_g h\le \epsilon,\quad h\le \overline h=\overline h(\epsilon,M_g).
\end{eqnarray}
Inserting \eqref{cota_x1_1}, \eqref{cota_x1_2}, \eqref{cota_x1_3}, \eqref{cota_x1_4} and \eqref{cota_x1_5}
into \eqref{cota_x1} and taking into account \eqref{eq:tb} and \eqref{eq:tb2} we conclude \eqref{eq_lim0}.
\end{proof}
\begin{lema}\label{le:con_hk}
Let $\hat J$ and $\hat J_{h,k}$ be the functionals defined in \eqref{eq:funcional} and \eqref{eq:funcional_fd} respectively.
Assume conditions  \eqref{lip_f}, \eqref{lip_f2}, \eqref{infty_f}, \eqref{lip_g}, \eqref{lip_g2}, \eqref{cota_g} hold.
Assume $\lambda>\overline L$ with $\overline L$ the constant in \eqref{overL}.
Then, for $0\le h\le 1/(2\lambda)$ there exist  positive constants

\noindent ${C_1=
C_1(\lambda,M_f,M_g,L_f,L_g)}$ and ${C_2=C_2(\lambda,L_f,L_g)}$ such that
\begin{eqnarray}\label{eq_lim0_hk}
 |\hat J(y_0,u)-\hat J_{h,k}(y_0,\bu)|\le C_1(h+k)+{C_2 M_u},\quad y_0\in {\Bbb R}^n,
\end{eqnarray}
where $u\in {\Bbb U}_{\rm ad}$ and $\bu=\left\{u_0,u_1,\ldots,\right\}=\left\{u(t_0),u(t_1),\ldots,\right\}$, $t_i=ih$, $i=0,1,\ldots,$
{and $M_u$ is defined in \eqref{la_Mu}}.
\end{lema}
\begin{proof}
We argue as in \cite[Lemma 1.2, Chapter VI]{Bardi}. Let $y(t)$ be the solution of \eqref{din_sis} and let us denote by $\tilde y(t)=\hat y_k$,
$k=[t/h]$ where $\hat y_k$ is the solution of \eqref{eq:euler} with $\hat y_0=y_0$. Let us denote by
$$
\overline u(t)=u_k=u(t_k),\quad t\in[kh,(k+1)h).
$$
We first observe that
$$
|\hat J(y_0,u)-\hat J_{h,k}(y_0,\bu)|\le X+Y,
$$
where
\begin{eqnarray*}
X&=&\int_0^\infty |g(y(s),u(s))-I_{k}g(\tilde y(s),\overline u(s))|e^{-\lambda s}~ds,\\
Y&=&\int_0^\infty |I_{k}g(\tilde y(s),\overline u(s))||e^{-\lambda s}-e^{-\lambda \theta[s/h]h}|~ds,
\end{eqnarray*}
and, as in Lemma~\ref{le:con}, $\theta=-\log(\delta_h)/(\lambda h)$. To bound $X$ we
decompose
\begin{eqnarray*}
|g(y(s),u(s))-I_{k}g(\tilde y(s),\overline u(s))|&\le& |g(y(s),u(s))-g(y(s),\overline u(s))|\nonumber\\
&&\quad+|g(y(s),\overline u(s))-I_{k}g(y(s),\overline u(s))|\nonumber\\
&&\quad+|I_{k}g(y(s),\overline u(s))-I_{k}g(\tilde y(s),\overline u(s))|.
\end{eqnarray*}
Arguing as in Lemma~\ref{le:con} we get
\begin{eqnarray*}
|g(y(s),u(s))-I_{k}g(\tilde y(s),\overline u(s))|&\le&
L_g {M_u}+ L_g k +C n  L_g \|y(s)-\tilde y(s)\|_\infty
\end{eqnarray*}
and then applying \eqref{ymenosytilde} we obtain
\begin{eqnarray*}
X\le \frac{1}{\lambda}\left(L_g {M_u}+ L_g k\right)+C \frac{n L_g }{\overline L}\int_0^\infty \left(sL_f({M_u}+k)+M_f h\right)e^{(\overline L-\lambda) s}~ds.
\end{eqnarray*}
Then, for $\lambda >\overline L$ there exist constants $C_1=C_1(\lambda,M_f,L_f,L_g,L_u)$
and $C_2(\lambda,L_f,L_g)$ such that
$$
X\le C_1 (h+k)+{C_2M_u}.
$$
To bound $Y$ we first observe that, arguing as before, $|I_{k}g(\tilde y(s),\overline u(s))|\le M_g$ and then
$$
Y\le M_g \int_0^\infty |e^{-\lambda s}-e^{-\lambda \theta[s/h]h}|~ds.
$$
Applying the mean value theorem
$$
Y\le M_g\int_0^\infty \lambda \max\left\{e^{-\lambda s},e^{-\lambda \theta [s/h]h} \right\}|\lambda s-\lambda \theta [s/h]h|~ds.
$$
Now, since $|s-\theta [s/h ]h|\le (\theta-1)s+\theta h$ we get
$$
Y\le M_g\lambda^2 e^{\theta \lambda h}\int_0^\infty e^{-\lambda s}((\theta -1)s+\theta h)~ds,
$$
and since both
$$
\int_0^\infty s e^{-\lambda s}~ds,\quad \int_0^\infty e^{-\lambda s}~ds,
$$
are bounded and $\lim_{h\rightarrow 0}(\theta -1)h=\lambda/2$ (for $0\le h\le 1/(2\lambda)$ the function $(\theta -1)/h$ is
an increasing function bounded by its value at $h=1/(2\lambda)$, i.e., $(\theta-1)/h\le 2\lambda(2\log(2)-1)$, we conclude
$$
Y\le C h,\quad C=C(M_g,\lambda)>0.
$$
\end{proof}
\begin{Theorem}\label{th_4}
Assume conditions  \eqref{lip_f}, \eqref{lip_f2}, \eqref{infty_f}, \eqref{lip_g}, \eqref{lip_g2} and \eqref{cota_g}  hold.
Assume $\lambda>\overline L$ with $\overline L$ the constant in \eqref{overL}. Then, for $0\le h\le 1/(2\lambda)$ there exists positive constants ${C_1=
C_1(\lambda,M_f,M_g,L_f,L_g)}$ and  ${C_2=
C_2(\lambda,L_f,L_g)}$ such that
\begin{equation}\label{cota_buena}
|v(y)-v_{h,k}(y)|\le C_1(h+k)+{C_2M_u},\quad y\in {\Bbb R}^n,
\end{equation}
{and $M_u$ is defined in \eqref{la_Mu}}.
\end{Theorem}
\begin{proof}
In view of \eqref{eq:cha}
let us denote by $\overline \bu\in {\cal U}$ a control giving the minimum
$$
v_{h,k}(y)=\hat J_{h,k}(y,\overline \bu).
$$
Then
$$
v(y)-v_{h,k}(y)\le \hat J(y,\overline u)-J_{h,k}(y,\overline \bu),
$$
where $\overline u\in{\Bbb U}_{\rm ad}$ such that $\overline u(t_i)=\overline u_i$, $\overline \bu=\left\{\overline u_0,\overline u_1,\ldots,\right\}$.
Applying \eqref{eq_lim0_hk}, there exist  positive constants ${C_1=
C(\lambda,M_f,M_g,L_f,L_g)}$ and ${C_2=
C(\lambda,L_f,L_g)}$ such that
\begin{equation}\label{puessi}
v(y)-v_{h,k}(y)\le C_1(h+k)+{C_2M_u}.
\end{equation}
Now, let us denote by $\underline u \in{\Bbb U}_{\rm ad}$ the control giving the minimum in \eqref{eq_v} and let us denote by
$
\underline \bu=\left\{\underline u(t_0),\underline u(t_1),\ldots,\right\}.
$
Then, arguing as before there exist  positive constants ${C_1=
C(\lambda,M_f,M_g,L_f,L_g)}$ and ${C_2=
C(\lambda,L_f,L_g)}$ such that
$$
v_{h,k}(y)-v(y)\le  \hat J_{h,k}(y,\underline \bu)-\hat J(y,\underline u)\le C_1 (h+k)+{C_2M_u},
$$
which together with \eqref{puessi} implies \eqref{cota_buena}.
\end{proof}
{
\begin{remark}
We observe that Theorem \ref{th_4} gives a bound for the error without assuming any regularity on the controls. The error bound \eqref{cota_buena}
has two terms. The first one is always first order convergent both in time and space. The second one depends on the size of
$M_u$ defined in \eqref{la_Mu}, which depends on the regularity of the controls. As it is always the case when one
applies numerical methods, some regularity is required to achieve the best rate of convergence as possible. In
Theorems \ref{th_5} and \ref{th_6} below we get as  a consequence of Theorem 4 two possible scenarios. In Theorem \ref{th_5}, assuming condition \eqref{conver_con} holds
for the controls (which is true for uniformly continuous controls) we prove convergence. In Theorem \ref{th_6}, assuming that the controls are Lipschitz-continuous, \eqref{lip_u}, we prove that $M_u$ in \eqref{la_Mu} behaves as $h$, so that, in \eqref{cota_buena}, the optimal rate of convergence
of order one both in time and space is recovered. Although in a concrete problem one could not have enough regularity for the controls to achieve
first order of convergence in time (observe that the rate of convergence is always one in space) the error bound \eqref{cota_buena} can always be applied. This error bound allow us to identify the different sources of the error in the method and consequently is able to
explain the behavior of the method. As a conclusion, we can discard a behaviour for the rate of convergence as $O(k/h)$, as the existing bounds in the literature had predicted.
\end{remark}}
{In the following theorem we deduce convergence as $h$ goes to zero assuming condition \eqref{conver_con} holds.
\begin{Theorem}\label{th_5}
Assume conditions  \eqref{lip_f}, \eqref{lip_f2}, \eqref{infty_f}, \eqref{lip_g}, \eqref{lip_g2}, \eqref{cota_g}
and \eqref{conver_con} hold.
Assume $\lambda>\overline L$ with $\overline L$ the constant in \eqref{overL}. Then,
$$
\lim_{h\rightarrow 0,k\rightarrow 0}|v(y)-v_{h,k}(y)|=0,\quad y\in {\Bbb R}^n.
$$
\end{Theorem}
\begin{proof}
The conclusion is obtained as a corollary of Theorem 4 applying \eqref{conver_con} to
bound the second term in \eqref{cota_buena}.
\end{proof}
}
{If we assume that the controls are Lipschitz-continuous; i.e., there exists a positive constant $L_u>0$ such that
\begin{eqnarray}\label{lip_u}
\|u(t)-u(s)\|_2\le L_u |t-s|.
\end{eqnarray}
we can prove first order of convergence both in time and space}.
{
\begin{Theorem}\label{th_6}
Assume conditions  \eqref{lip_f}, \eqref{lip_f2}, \eqref{infty_f}, \eqref{lip_g}, \eqref{lip_g2}, \eqref{cota_g} and \eqref{lip_u} hold.
Assume $\lambda>\overline L$ with $\overline L$ the constant in \eqref{overL}. Then, for $0\le h\le 1/(2\lambda)$ there exist positive constants ${C_1=
C_1(\lambda,M_f,M_g,L_f,L_g)}$ and  ${C_2=
C_2(\lambda,L_f,L_g,L_u)}$ such that
$$
|v(y)-v_{h,k}(y)|\le C_1(h+k)+{C_2h},\quad y\in {\Bbb R}^n.
$$
\end{Theorem}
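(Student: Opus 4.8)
The plan is to read both sides as infima of cost functionals and compare them directly through Lemma~\ref{le:con_hk}. By \eqref{eq_v} the value function is $v(y)=\inf_{u\in{\Bbb U}_{\rm ad}}\hat J(y,u)$, while Theorem~\ref{th_prin} furnishes the matching representation $v_{h,k}(y)=\inf_{\bu\in{\cal U}}\hat J_{h,k}(y,\bu)$. The underlying principle is that two infima differ by at most the uniform gap between the functionals along a suitable pairing of a continuous control $u$ with a discrete control $\bu$. I would prove the two inequalities $v_{h,k}(y)\le v(y)+C(h+k)$ and $v(y)\le v_{h,k}(y)+C(h+k)$ separately, because the pairing used in each direction is different.

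For $v_{h,k}(y)\le v(y)+C(h+k)$ I would fix $\epsilon>0$, take a near-optimal control $u$ with $\hat J(y,u)\le v(y)+\epsilon$, and let $\bu=\{u(t_0),u(t_1),\ldots\}$ with $t_i=ih$ be its time sampling. Then $v_{h,k}(y)\le\hat J_{h,k}(y,\bu)$ trivially, and Lemma~\ref{le:con_hk}---whose hypotheses $\lambda>\overline L$ and $0\le h\le1/(2\lambda)$ are exactly those assumed here---gives $\hat J_{h,k}(y,\bu)\le\hat J(y,u)+C(h+k)$. Combining these and letting $\epsilon\to0$ yields this direction.

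For the reverse inequality I would instead start from the discrete side: fix $\epsilon>0$, choose $\bu=\{u_0,u_1,\ldots\}$ with $\hat J_{h,k}(y,\bu)\le v_{h,k}(y)+\epsilon$, and lift $\bu$ to the piecewise constant control $\overline u(t)=u_n$ on $[t_n,t_{n+1})$. Since $\overline u\in{\Bbb U}_{\rm ad}$ we have $v(y)\le\hat J(y,\overline u)$, and because $\overline u$ samples back exactly to $\bu$, the comparison estimate of Lemma~\ref{le:con_hk}, read for the pair $(\overline u,\bu)$, gives $\hat J(y,\overline u)\le\hat J_{h,k}(y,\bu)+C(h+k)$. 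Letting $\epsilon\to0$ and combining the two directions delivers $|v(y)-v_{h,k}(y)|\le C(h+k)$.

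The step I expect to be delicate is the asymmetric role of the regularity of the control. In the first direction the near-optimal $u$ must be Lipschitz with a fixed constant so that the sampling mismatch $\|u(s)-u(t_k)\|_2\le L_u h$ is genuinely first order; this is precisely where \eqref{lip_u} enters, and it explains why merely continuous controls lose the rate. In the second direction the lifted $\overline u$ is only piecewise constant, so Lemma~\ref{le:con_hk} cannot be quoted verbatim; one must re-inspect its proof and observe that Lipschitz continuity of the control is used there solely to bound the terms $\|f(y(s),u(s))-f(y(s),\overline u(s))\|$ and $|g(y(s),u(s))-g(y(s),\overline u(s))|$, which vanish identically once $u=\overline u$. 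The surviving contributions---the $O(k)$ interpolation errors of the type \eqref{needed}--\eqref{eq:segundo}, the $O(h)$ Euler and incomplete-step terms collected in \eqref{ymenosytilde}, and the discount-factor discrepancy $|e^{-\lambda s}-e^{-\lambda\theta[s/h]h}|=O(h)$---are independent of the control and keep the estimate at $O(h+k)$. Finally one must check that the constant $C$ in the first direction does not depend on the particular near-optimal control, which is guaranteed by the common Lipschitz bound $L_u$ furnished by \eqref{lip_u}.
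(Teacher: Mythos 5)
Your proof is correct and follows the same overall route as the paper: represent both sides as infima (via \eqref{eq_v} and Theorem \ref{th_prin}), then compare the two functionals through Lemma \ref{le:con_hk}, splitting into the two one-sided inequalities with opposite pairings of continuous and discrete controls. Two points of execution differ, and both are genuine improvements. First, the paper assumes the infima are attained (it takes $\overline \bu$ with $v_{h,k}(y)=\hat J_{h,k}(y,\overline\bu)$ and a minimizer $\underline u$ of \eqref{eq_v}); your use of $\epsilon$-optimal controls removes this unjustified existence assumption. Second, and more substantively, in the direction $v(y)-v_{h,k}(y)\le C(h+k)$ the paper lifts the discrete optimal sequence to a control $\overline u\in{\Bbb U}_{\rm ad}$ with $\overline u(t_i)=\overline u_i$ and quotes Lemma \ref{le:con_hk} verbatim; this implicitly requires a Lipschitz interpolant (with the fixed constant $L_u$ of \eqref{lip_u}) through an arbitrary sequence of values in $U_{\rm ad}$, which need not exist uniformly as $h\to 0$, since consecutive values $\overline u_i,\overline u_{i+1}$ can be far apart while $t_{i+1}-t_i=h$ shrinks. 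Your piecewise-constant lifting, together with the re-inspection of the lemma's proof showing that \eqref{lip_u} enters only through the terms $\|f(y(s),u(s))-f(y(s),\overline u(s))\|_2$ and $|g(y(s),u(s))-g(y(s),\overline u(s))|$ --- which vanish identically when the continuous control coincides with its own sampling --- is the correct repair, and the surviving interpolation, Euler, and discount-factor terms are indeed $O(h+k)$ with a constant independent of the chosen control. The only step you should make explicit is the admissibility $\overline u\in {\Bbb U}_{\rm ad}$ of the piecewise-constant lift, which is needed for $v(y)\le \hat J(y,\overline u)$: under the paper's literal definition of ${\Bbb U}_{\rm ad}$ this holds, but if \eqref{lip_u} is read as restricting the admissible class to Lipschitz controls it requires comment (a difficulty that the paper's own version of this direction shares in dual form).
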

\begin{proof}
The conclusion is obtained as a corollary of Theorem 4 applying \eqref{lip_u} to
bound the second term in \eqref{cota_buena}.
\end{proof}
}
\begin{remark}
{
The regularity requirements on the controls can still be weakened. Let us assume that the controls have a finite number of jump discontinuities:
$t_1^*< t_2^*<\ldots <t_l^*,$. For a fixed $h$ let us denote by $I_j^*=[m_jh,(m_j+1)h)$, $m_j\in {\Bbb N},$  the interval such that
$t_j^*\in I_j^*$, $j=1,\ldots,l$. Then, the arguments of Lemma \ref{le:con_hk} can be adapted to get instead of \eqref{eq_lim0_hk}
the following bound
\begin{eqnarray}\label{eq_lim0_hk_new}
 |\hat J(y_0,u)-\hat J_{h,k}(y_0,\bu)|\le C_1(h+k)+C_2 h+{C_3 M_u^*},\quad y_0\in {\Bbb R}^n,
\end{eqnarray}
where $C_1=C_1(\lambda,M_f,M_g,L_f,L_g)$, {$C_2=C_2(\lambda,L_f,L_g,M_s)$} and $C_3=C_3(\lambda,L_f,L_g)$ and
\begin{eqnarray*}
M_u^*&:=&\max_n\max_{s\in[nh,(n+1)h), s\not\in I}\|u(s)-u(t_n)\|_2,\quad I=I_1^*\cup\ldots \cup I_l^*,\\
M_s&:=&\max_{1\le j\le l}\max_{s\in I_j^*}\|u(s)-u(t_{m_j})\|_2.
\end{eqnarray*}
The idea is to use the additive property of integrals to isolate those corresponding to intervals $I_j^*$, $j=1,\ldots,l$.
To bound the terms involving integrals over $I_j^*$ one can apply the boundedness of the integrand together with the
fact that the diameter of $I_j^*$ is equal $h$ for $j=1,\ldots,l$. The union of the bounds corresponding to integrals
over $I_j^*$ gives rise to the second term on the right-hand-side of \eqref{eq_lim0_hk_new}.}

{Accordingly, instead of \eqref{cota_buena} in Theorem \ref{th_4}, one can prove
\begin{equation}\label{cota_mejor}
|v(y)-v_{h,k}(y)|\le C_1(h+k)+C_2 h+{C_3 M_u^*},\quad y\in {\Bbb R}^n,
\end{equation}
with $C_1, C_2$ and $C_3$ the constants in \eqref{eq_lim0_hk_new}.}

{From \eqref{cota_mejor}, and depending on the regularity of the controls, one gets either convergence or convergence of order one, arguing as in Theorems \ref{th_5} and \ref{th_6}
but with the assumptions on the controls restricted to the finite number of intervals: $$[0,t_1^*),\ldots [t_j^*,t_{j+1}^*),\ldots,[t_l^*,\infty).$$ Moreover, assuming that the controls are H$\ddot{\rm o}$lder continuous over those intervals  of order $\alpha$, for any $0<\alpha<1$, one
gets an error bound in time of size $O(h^\alpha)$, applying \eqref{cota_mejor}.
}
\end{remark}
\subsection{Error analysis arguing with piecewise constants controls}
In this section we adapt the error analysis in \cite{boba_etal} for finite horizon problems to our context of infinite horizon problems
to get a weaker result for the rate of convergence but weakening also the regularity assumptions over the controls.

Let us denote by
$$
{\Bbb U}^{pc}_{\rm ad}=\left\{u\in {\Bbb U}\mid u(t)=u_k\in U_{\rm ad}, \  t\in[t_k,t_{k+1})\right\},
$$
with $u_k$ constant.
Let us observe that we can consider the continuous problem for controls in ${\Bbb U}^{pc}_{\rm ad}$. The
following lemmas are a direct consequence of Lemmas \ref{le:con} and \ref{le:con_hk}
\begin{lema}\label{le:con_cons}
Let $\hat J$ and $\hat J_{h,k}$ be the functionals defined in \eqref{eq:funcional} and \eqref{eq:funcional_fd} respectively.
Assume conditions  \eqref{lip_f}, \eqref{lip_f2}, \eqref{infty_f}, \eqref{lip_g}, \eqref{lip_g2}, \eqref{cota_g}. Then
\begin{eqnarray}\label{eq_lim0_cons}
\lim_{h\rightarrow 0, k\rightarrow 0} |\hat J(y_0,u)-\hat J_{h,k}(y_0,\bu)|=0,\quad y_0\in {\Bbb R}^n,
\end{eqnarray}
where $u\in {\Bbb U}^{pc}_{\rm ad}$ and $\bu=\left\{u_0,u_1,\ldots,\right\}$ with $u_k=u(t),$ $t\in[t_k,t_{k+1})$.
\end{lema}
\begin{lema}\label{le:con_hk_cons}
Let $\hat J$ and $\hat J_{h,k}$ be the functionals defined in \eqref{eq:funcional} and \eqref{eq:funcional_fd} respectively.
Assume conditions  \eqref{lip_f}, \eqref{lip_f2}, \eqref{infty_f}, \eqref{lip_g}, \eqref{lip_g2}, \eqref{cota_g} hold.
Assume $\lambda>\overline L$ with $\overline L$ the constant in \eqref{overL}.
Then, for $0\le h\le 1/(2\lambda)$ there exists a  positive constant
$C_1=C_1(\lambda,M_f,M_g,L_f,L_g)$ such that
\begin{eqnarray}\label{eq_lim0_hk_cons}
 |\hat J(y_0,u)-\hat J_{h,k}(y_0,\bu)|\le C_1(h+k),\quad y_0\in {\Bbb R}^n,
\end{eqnarray}
where $u\in {\Bbb U}_{\rm ad}^{pc}$ and $\bu=\left\{u_0,u_1,\ldots,\right\}$ with $u_k=u(t),$ $t\in[t_k,t_{k+1})$.
\end{lema}
The proof of Lemmas \ref{le:con_cons} and \ref{le:con_hk_cons} is obtained taking in the proofs of Lemmas \ref{le:con} and \ref{le:con_hk}
$M_u=0$ since for piecewise controls it holds, see \eqref{paluego},
$$
\overline u(t)=u_k=u(t),\ \forall t\in[kh,(k+1)h).
$$
The following theorem is analogous to Theorem \ref{th_4}. For the proof we need to assume an additional convexity assumption, see \cite[(A4)]{boba_etal},
\begin{itemize}
\item (CA) For every $y\in {\Bbb R}^n$,
\begin{eqnarray*}
\left\{f(y,u), g(y,u),\quad  u\in U_{\rm ad}\right\}
\end{eqnarray*}
is a convex subset of ${\Bbb R}^{n+1}$.
\end{itemize}
\begin{Theorem}\label{th_4_cons}
Assume conditions  \eqref{lip_f}, \eqref{lip_f2}, \eqref{infty_f}, \eqref{lip_g}, \eqref{lip_g2}, \eqref{cota_g} and {\rm(CA)}  hold.
Assume $\lambda>\overline L$ with $\overline L$ the constant in \eqref{overL}. Then, for $0\le h\le 1/(2\lambda)$ there exist positive constants $C_1=
C_1(\lambda,M_f,M_g,L_f,L_g)$ and  $C_2=
C_2(\lambda,M_f,M_g,L_f,L_g)$ such that for $y\in {\Bbb R}^n$
\begin{equation}\label{cota_buena_cons}
|v(y)-v_{h,k}(y)|\le C_1(h+k)+C_2 \frac{1}{(1+\beta)^2\lambda^2}(\log(h))^2h^{\frac{1}{1+\beta}},\quad \beta=\frac{\sqrt{n}L_f}{\lambda}. 
\end{equation}
\end{Theorem}
\begin{proof}
In view of \eqref{eq:cha}
let us denote by $\overline \bu\in {\cal U}$ a control giving the minimum
$$
v_{h,k}(y)=\hat J_{h,k}(y,\overline \bu).
$$
Then
$$
v(y)-v_{h,k}(y)\le \hat J(y,\overline u)-J_{h,k}(y,\overline \bu),
$$
where $\overline u\in{\Bbb U}_{\rm ad}^{pc}$ such that $\overline u(t)=\overline u_i$, $t\in[t_i,t_{i+1})$.
 Applying \eqref{eq_lim0_hk_cons}, there exists a  positive constant $C_1=
C(\lambda,M_f,M_g,L_f,L_g)$  such that
\begin{equation}\label{puessi}
v(y)-v_{h,k}(y)\le C_1(h+k).
\end{equation}
Now, let us denote by $\underline u \in{\Bbb U}_{\rm ad}$ the control giving the minimum in \eqref{eq_v}
so that 
\begin{equation}\label{fal_0}
v(y)=\hat J(y,\underline u)=\int_0^\infty g(y(t),\underline u(t))e^{-\lambda t}dt.
\end{equation} 
The following argument is taken from \cite[Appendix B]{boba_etal}. 

For any $t_k$ we can write
$$
y(t)=y(t_k)+\int_{t_k}^t f(y(s),\underline u(s))ds.
$$
Applying \eqref{infty_f}
$$
\|y(t)-y(t_k)\|_\infty \le M_f h.
$$
Then, for any $t\in [t_k,t_{k+1}]$, using the above inequality and \eqref{lip_f} we obtain
$$
\left\|\int_{t_k}^t f(y(s),\underline u(s))-f(y(t_k),\underline u(s)) ds\right\|_\infty\le \sqrt{n}L_f M_f h^2.
$$
As a consequence, we get
\begin{equation}\label{fal_1}
y(t)=y(t_k)+\int_{t_k}^t f(y(t_k),\underline u(s))ds+\epsilon_k,\quad \|\epsilon_k\|_\infty\le \sqrt{n} L_f M_f h^2.
\end{equation}
On the other hand, as in \cite[(B.6a), (B.6b)]{boba_etal}, thanks to (CA), for any $k$, there exists $\underline u_k$ such that
\begin{eqnarray}
\int_{t_k}^{t_{k+1}}f(y(t_k),\underline u(s))ds=h f(y(t_k),\underline u_k)\label{mean_f}\\
\int_{t_k}^{t_{k+1}}g(y(t_k),\underline u(s))e^{-\lambda s}ds\le h g(y(t_k),\underline u_k))e^{-\lambda t_k}\label{mean_g_i}.
\end{eqnarray}
From \eqref{mean_g_i} and \eqref{cota_g} we get
\begin{eqnarray}\label{mean_g}
\int_{t_k}^{t_{k+1}}(g(y(t_k),\underline u(s)-g(y(t_k),\underline u_k))e^{-\lambda s}ds&\le &
\int_{t_k}^{t_{k+1}}g(y(t_k),\underline u_k)(e^{-\lambda t_k}-e^{-\lambda s})ds\nonumber\\
&\le& \lambda M_g h^2 .
\end{eqnarray}
From \eqref{fal_1} and \eqref{mean_f}
$$
y(t)=y(t_k)+\int_{t_k}^t f(y(t_k),\underline u_k)ds+\epsilon_k,\quad \|\epsilon_k\|_\infty\le \sqrt{n}L_f M_f h^2.
$$
Let us denote by $y^{pc}$ the time-continuous trayectory solution with the same initial condition as $y$ associated
to the control $
\underline u^{pc}(t)=\underline u_k,\ \forall t\in[t_k,t_{k+1}).
$

Arguing as above we get
\begin{equation}\label{fal_2}
y^{pc}(t)=y^{pc}(t_k)+\int_{t_k}^t f(y^{pc}(t_k),\underline u_k)ds+\epsilon'_k,\quad \|\epsilon'_k\|_\infty\le \sqrt{n}L_f M_f h^2.
\end{equation}
Subtracting \eqref{fal_2} from \eqref{fal_1} and  using \eqref{lip_f} we obtain
\begin{eqnarray*}
\|y(t_k)-y^{pc}(t_k)\|_\infty&\le &\|y(t_{k-1})-y^{pc}(t_{k-1})\|_\infty+\sqrt{n}hL_f\|y(t_{k-1})-y^{pc}(t_{k-1})\|_\infty\\
&&\quad +\|\epsilon_{k-1}\|_\infty+\|\epsilon'_{k-1}\|_\infty\\
&\le&(1+ h\sqrt{n}L_f)\|y(t_{k-1})-y^{pc}(t_{k-1})\|_\infty+\|\epsilon_{k-1}\|_\infty+\|\epsilon'_{k-1}\|_\infty.
\end{eqnarray*}
Since $y(y_0)=y^{pc}(t_0)$ by standard recursion we get
\begin{equation}\label{fal_3}
\|y(t_k)-y^{pc}(t_k)\|_\infty\le e^{t_k\sqrt{n}L_f}\sum_{0\le l\le {k-1}}\left(\|\epsilon_l\|_\infty+\|\epsilon_l'\|_\infty\right)
\le 2 t_k e^{\sqrt{n}t_kL_f}\sqrt{n}L_f M_f h. 
\end{equation}
For the control $\underline u \in{\Bbb U}_{\rm ad}$ giving the minimum in \eqref{eq_v} and for 
$
\underline \bu=\left\{\underline u_0,\ldots \underline u_k,\ldots,\right\}.
$
we obtain
\begin{eqnarray*}
v_{h,k}(y)-v(y)\le \hat J_{h,k}(y,\underline \bu)-\hat J(y,\underline u)=
\hat J_{h,k}(y,\underline \bu)-\hat J(y^{pc},\underline u^{pc})+\hat J(y^{pc},\underline u^{pc})-\hat J(y,\underline u).
\end{eqnarray*}
The first term on the right-hand side above is bounded in Lemma \ref{le:con_hk_cons} so that
\begin{eqnarray*}
v_{h,k}(y)-v(y)\le C_1(h+k)+\hat J(y^{pc},\underline u^{pc})-\hat J(y,\underline u).
\end{eqnarray*}
To conclude we need to bound the second term.
We write
\begin{eqnarray}\label{fal_6}
\hat J(y^{pc},\underline u^{pc})-\hat J(y,\underline u)&=&
\int_0^T \left(g(y^{pc}(s),\underline u^{pc}(s))-g(y(s),\underline u(s))\right)e^{-\lambda s}ds\nonumber\\
&&\quad+\int_T^\infty \left(g(y^{pc}(s),\underline u^{pc}(s))-g(y(s),\underline u(s))\right)e^{-\lambda s}ds.
\end{eqnarray}
For the second term on the right-hand side above, applying \eqref{cota_g} we get
$$
\left|\int_T^\infty \left(g(y^{pc}(s),\underline u^{pc}(s))-g(y(s),\underline u(s))\right)e^{-\lambda s}ds\right|
\le 2 M_g \int_T^\infty e^{-\lambda s}ds=2M_g \frac{e^{-\lambda T}}{\lambda}.
$$
Let $$e^{-\lambda T}=h^{1/(1+\beta)},\quad \beta=\frac{\sqrt{n}L_f}{\lambda}.$$  Then
$$
T=\log(h^{-1/(1+\beta)\lambda}).
$$
We fix the above value of $T$ so that from \eqref{fal_6} we get
\begin{eqnarray}\label{fal_7}
\left|\hat J(y^{pc},\underline u^{pc})-\hat J(y,\underline u)\right|&\le&
\int_0^T \left(g(y^{pc}(s),\underline u^{pc}(s))-g(y(s),\underline u(s))\right)e^{-\lambda s}ds\nonumber\\
&&\quad+\frac{2M_g }{\lambda}h^{\frac{1}{1+\beta}} .\end{eqnarray}
To conclude we will bound the first term on the right-hand side of \eqref{fal_7}.

Also, for simplicity we assume there exists an integer $N$ such that $T=N\Delta t$.
For the first term on the right-hand side of \eqref{fal_7} we have
\begin{eqnarray*}
&&\int_0^T \left(g(y^{pc}(s),\underline u^{pc}(s))-g(y(s),\underline u(s))\right)e^{-\lambda s}ds=
\\
&&\quad \sum_{k=0}^{N-1}\int_{t_k}^{t_{k+1}}\left(g(y^{pc}(s),\underline u_k)-g(y(s),\underline u(s))\right)e^{-\lambda s}ds.
\end{eqnarray*}
Adding and subtracting terms we get
\begin{eqnarray*}
&&\int_{t_k}^{t_{k+1}}\left(g(y^{pc}(s),\underline u_k)-g(y(s),\underline u(s))\right)e^{-\lambda s}ds
=\\
&&\quad \int_{t_k}^{t_{k+1}}\left(g(y^{pc}(s),\underline u_k)-g(y^{pc}(t_k),\underline u_k)\right)e^{-\lambda s}ds
\\
&&\quad +\int_{t_k}^{t_{k+1}}\left(g(y^{pc}(t_k),\underline u_k)-g(y(t_k),\underline u_k)\right)e^{-\lambda s}ds\\
&&\quad +\int_{t_k}^{t_{k+1}}\left(g(y(t_k),\underline u_k)-g(y(t_k),\underline u(s))\right)e^{-\lambda s}ds
\\
 &&\quad +\int_{t_k}^{t_{k+1}}\left(g(y(t_k),\underline u(s))-g(y(s),\underline u(s))\right)e^{-\lambda s}ds
\end{eqnarray*}
Applying \eqref{lip_g}, \eqref{fal_3} and \eqref{mean_g} we get
\begin{eqnarray*}
&&\int_{t_k}^{t_{k+1}}\left(g(y^{pc}(s),\underline u_k)-g(y(s),\underline u(s))\right)e^{-\lambda s}ds
\le 2\sqrt{n}h^2 L_g M_f +\lambda h^2M_g\\
&&\quad+
2 n L_g h^2 t_k e^{\sqrt{n}t_kL_f}L_f M_f.
\end{eqnarray*}
And then
\begin{eqnarray*}
&&\int_0^T \left(g(y^{pc}(s),\underline u^{pc}(s))-g(y(s),\underline u(s))\right)e^{-\lambda s}ds
\le T h\left(2 \sqrt{n}L_g M_f +   \lambda M_g\right)\\
&&\quad+
2 n L_g T^2 h  e^{\sqrt{n} TL_f}L_f M_f.
\end{eqnarray*}
To conclude, we observe that with the definition of $T$ we get
$$
he^{\sqrt{n}T L_f}=h^{\frac{1}{1+\beta}},
$$
since we have chosen $T$ and $\beta$ to optimize the rate of convergence. The above term, together
with the last term in \eqref{fal_7} are the terms that produce a reduction in the rate of convergence compared with the finite horizon case.
We finally obtain
\begin{eqnarray*}
v_{h,k}(y)-v(y)\le C_1(h+k)+ C_2 \frac{1}{(1+\beta)^2\lambda^2}(\log(h))^2h^{\frac{1}{1+\beta}},\quad \beta=\frac{\sqrt{n}L_f}{\lambda}.
\end{eqnarray*}
\end{proof}
\begin{remark}
Let us observe that in view of \eqref{cota_buena_cons} and taking into account that $\beta$ is smaller than $1$ then we loose at most half an order in the rate of convergence in time
of the method up to a logarithmic term. This comes from adapting the arguments in \cite{boba_etal} in the
context of finite horizon problems to our infinite horizon case.
\end{remark}

  \end{document}